\pdfoutput=1
\documentclass[11pt,letterpaper]{amsart}
\usepackage{pinlabel}
\usepackage{amsmath} 
\usepackage{cjhebrew}
\usepackage{graphicx} 
\usepackage{subcaption}
\usepackage{tikz-cd}
\setkeys{Gin}{keepaspectratio}
\usepackage{hyperref}
\usepackage{url}
\usepackage{bm}
\usepackage{mathabx}
\usepackage[left=1.25in,top=1in,right=1.25in,bottom=1in,head=.1in]{geometry}
\usepackage{xcolor}
\usepackage{tikz}
\usepackage{adjustbox}
\usepackage[normalem]{ulem} %
\usepackage{enumerate}
\usepackage{hyperref}
\usepackage{comment}

\usepackage{verbatim}
\newcommand{\items}{\begin{itemize}[leftmargin=25pt,rightmargin=15pt]
  \setlength\itemsep{2pt}}

\usepackage[
backend=biber,
style=alphabetic,
sorting=nyt, doi=false, isbn=false,  eprint=false,maxnames=15,minnames=15
]{biblatex}
\subjclass[2020]{57R50, 57K41,57R52}

\newcommand{\stopitems}{\end{itemize}}

\usepackage[textsize=scriptsize]{todonotes}

\usepackage{fancyhdr}
\newtheorem{theorem}{Theorem}[section] 
\newtheorem*{theorem*}{Theorem}
\newtheorem{lemma}[theorem]{Lemma}

\newtheorem*{conjecture*}{Conjecture}
\newtheorem*{question*}{Question}
\newtheorem*{lemma*}{Lemma}
\newtheorem{proposition}[theorem]{Proposition}
\newtheorem{corollary}[theorem]{Corollary}
\newtheorem*{corollary*}{Corollary}
\theoremstyle{definition}
\newtheorem{definition}[theorem]{Definition}

\newtheorem{remark}[theorem]{Remark}

\newtheorem{example}[theorem]{Example}

\newtheorem*{example*}{Example}
\newtheorem*{remark*}{Remark}
\newtheorem*{remarks*}{Remarks}
\newtheorem*{addenda*}{Addenda}
\newtheorem*{construction*}{Construction}

\newcommand{\Z}{\mathbb Z}

\newcommand{\id}{\textup{id}}

\renewcommand{\phi}{\varphi}

\newcommand{\interior}{\textup{int}}

\newcommand{\unred}[1]{ \ignorespaces}  

\title{exotic diffeomorphisms on closed aspherical 4-manifolds}

\author{Hokuto Konno}
\address{Graduate School of Mathematical Sciences, the University of Tokyo, 3-8-1 Komaba, Meguro, Tokyo 153-8914, Japan \\and\\
RIKEN iTHEMS, Wako, Saitama 351-0198, Japan}
\email{konno@ms.u-tokyo.ac.jp}

\author{Jianfeng Lin}
\address{Yau Mathematical Sciences Center, Tsinghua University, Beijing, 100871, China}
\email{linjian5477@mail.tsinghua.edu.cn}

\author{Anubhav Mukherjee}
\address{Department of Mathematics, University of Southern California, Los Angeles, 90026, USA}
\email{anubhav.mukherjee@usc.edu}

\author{Juan Muñoz-Echániz}
\address{Department of Mathematics, State University of New York, Stony Brook, 11794, USA}
\email{juan.munoz-echaniz@stonybrook.edu}

\begin{document}
\setlength{\headheight}{12.0pt}

\begin{abstract}
We study a class of diffeomorphisms of closed manifolds obtained from Davis' reflection trick construction, which are obtained by reflecting a diffeomorphism supported in the fundamental chamber of such a manifold. We use this construction to exhibit the first examples of closed aspherical $4$-manifolds with diffeomorphisms that are topologically isotopic to the identity map, but not smoothly isotopic to it.
\end{abstract}
\maketitle
\section{Introduction}\label{section:introduction}

A smooth closed $d$-dimensional manifold $M$ is \textit{aspherical} if its universal cover $\widetilde{M}$ is contractible; prominent examples are non-positively curved manifolds, solvmanifolds and aspherical bundles. In this article, we study a new construction of \textit{diffeomorphisms} on closed aspherical manifolds, based on the \textit{Davis reflection trick} \cite{Dav83} and inspired by the recent work by Davis--Hayden--Huang--Ruberman--Sunukjian \cite{DHHRS}. Our construction applies in all dimensions $d$, but is particularly interesting in dimension $d=4$ as it yields the first examples of \textit{exotic diffeomorphisms} on closed aspherical manifolds in dimension $d=4$. The latter application crucially relies on our previous work \cite[Theorem D]{KLMME1}, which exhibited smooth contractible Mazur $4$-manifolds which admit non-local diffeomorphisms.

\subsection{Main applications}

The homeomorphism type of an aspherical manifold $M$ is expected to be determined by its fundamental group $\pi_1 (M)$; this is the \textit{Borel Conjecture}. In contrast, examples are known of exotic pairs (i.e. homeomorphic but not diffeomorphic) of closed aspherical manifolds in dimensions $d>4$ \cite{Wall99,BT25,FJ89}, and more recently also in dimension $d=4$ \cite{DHHRS}. (Thus, the fundamental group $\pi_1 (M)$ fails to control the diffeomorphism type of $M$ when $d \geq 4$.)


In the spirit of the Borel Conjecture, one can ask how the smooth mapping class group $\pi_0 \mathrm{Diff}(M)$ of a closed aspherical manifold $M$ compares with the outer automorphism group $\mathrm{Out}(\pi_1 (M))$ of its fundamental group. They are known to be isomorphic in dimensions $d\leq 3$ (\cite[Theorem 8.1]{farb-margalit}, \cite[\S 2.3]{AFW}). For arbitrary dimensions, it has been conjectured that the topological pseudo-isotopy mapping class group of $M$ is isomorphic to $\mathrm{Out}(\pi_1 (M))$ \cite{HLLR}. In contrast, examples are known of exotic diffeomorphisms (i.e. topologically but not smoothly isotopic to the identity map) on closed aspherical manifolds in dimensions $d>4$: on standard tori \cite{hsiang-sharpe,hatcher} and exotic tori \cite{BKKT}. We construct the first instances of this phenomenon in dimension $d=4$:

\begin{theorem}\label{thm:main_intro} There exist (infinitely-many) smooth closed orientable aspherical $4$-manifolds $M$ with a self-diffeomorphism $f : M \to M $ such that:
\begin{enumerate}
\item $f$ is topologically isotopic to the identity map
\item $f$ is smoothly pseudo-isotopic to the identity map
\item $f^n$ is not smoothly isotopic to the identity map, for all $n \neq0$.
\end{enumerate}
\end{theorem}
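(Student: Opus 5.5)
The plan is to combine the Davis reflection trick with the non-local diffeomorphism of a contractible Mazur manifold from \cite[Theorem D]{KLMME1}. Let $W$ be such a Mazur $4$-manifold, with a diffeomorphism $g:W\to W$ that is the identity near $\partial W$ but not smoothly isotopic rel boundary to the identity. Because $W$ is contractible and $g$ is supported in the interior, Freedman's Alexander trick shows that $g$ is topologically isotopic to the identity rel boundary. We also expect $g$ to be smoothly pseudo-isotopic rel boundary to the identity. I would check that $g$ comes from, or can be chosen as, a diffeomorphism with this property, for instance a boundary Dehn twist type map that is pseudo-isotopic via Kreck/Quinn-type arguments in the simply connected setting.

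\textbf{Construction.} Following \cite{Dav83} and \cite{DHHRS}, triangulate $\partial W$ as a flag complex $L$, so that $W$ becomes a fundamental chamber with mirrors dual to $L$. Let $\Gamma$ be the right-angled Coxeter group of $L$ and take a torsion-free finite-index subgroup $\Gamma'$. The manifold $M=\mathcal{U}(\Gamma',W)$, obtained by gluing $[\Gamma:\Gamma']$ copies of $W$ along mirrors, is a closed orientable aspherical $4$-manifold. Asphericity holds because the universal cover is a Davis complex built from contractible chambers.

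Define $f$ by applying $g$ in one chamber $W_0\subset M$ and the identity elsewhere. Since $g$ is supported in the interior of $W_0$, properties (1) and (2) follow by extending the rel-boundary isotopies and pseudo-isotopies of $g$ by the identity. Infinitely many examples come from varying the finite-index subgroup $\Gamma'$, or from using infinitely many Mazur manifolds.

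\textbf{Main step.} The hard part is (3): showing that $f^n$ is not smoothly isotopic to the identity. I would proceed by contradiction. Suppose $f^n$ were smoothly isotopic to the identity. Pass to covers and use the reflection structure, in the spirit of \cite{DHHRS}, to reduce to a statement about $W$ itself. Concretely, there is a retraction-like folding map $M\to W$; alternatively, restricting to a suitable cover, the isotopy gives a loop of diffeomorphisms. I would then try to detect $g^n$ with a gauge-theoretic invariant of the type used in \cite{KLMME1}. This would be a families Seiberg--Witten or Floer-theoretic invariant of the mapping torus, computed on a neighbourhood of the chamber $W_0$ and additive under iteration. That additivity would give $n$ times a nonzero value, so $f^n$ cannot be isotopic to the identity for any $n\neq 0$.

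The obstacle is localization. An isotopy in $M$ need not preserve $W_0$, so one must show that the invariant distinguishing $g$ survives in the closed aspherical manifold. My first attempt would be a cut-and-paste argument. Cut $M$ along $\partial W_0$ and view the complement $M\setminus W_0$ as a $4$-manifold with boundary $\partial W$. Since $g$ is non-local, meaning it does not extend in a way that becomes trivial after gluing in arbitrary pieces, I would use the precise form of \cite[Theorem D]{KLMME1}. I expect that theorem to say that the invariant survives when $W$ is embedded in other manifolds with suitable properties, such as negative-definite complements or $b^+$ conditions. The next step would be to verify that $M\setminus W_0$ satisfies these conditions, possibly after passing to a finite cover where the relevant homological condition can be arranged.
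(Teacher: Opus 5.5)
Your construction of $M$ and your handling of (1)--(2) are fine in outline. The relative topological isotopy and pseudo-isotopy for the boundary Dehn twist come from Orson--Powell \cite[Theorem A, Theorem 3.1]{OP}; an Alexander trick does not apply directly, since $W$ is not a ball. Applying $g$ in a single chamber, rather than in every chamber as the paper does with $Q(g)$, is harmless.

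The gap is (3). Detecting $g^n$ by a families Seiberg--Witten invariant on $M$, localized by cutting along $\partial W_0$, is a hope rather than an argument. You give no mechanism making such an invariant well defined and nonzero on a closed aspherical $M$. The complement $M\setminus W_0$ has no reason to be definite or to satisfy any $b^+$ condition. Your guess about \cite[Theorem D]{KLMME1} is also off: it is a statement internal to $\pi_0\mathrm{Diff}(W,\partial W)$, namely that no nonzero power of the boundary Dehn twist is isotopic rel $\partial W$ to a diffeomorphism supported in a ball. It is not a gluing theorem for embeddings of $W$ into closed manifolds. Neither the folding map $M\to W$ (a retraction, which cannot transport isotopies) nor a finite cover solves the localization problem you correctly identify.

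The missing idea is that the localization is done purely topologically in the universal cover $D(W)$, so no new gauge theory is needed beyond \cite[Theorem D]{KLMME1}.
\begin{enumerate}
\item Lift an isotopy from $\mathrm{id}$ to $f^n$ to $D(W)$, starting at the identity. Its endpoint is a deck transformation $h$ composed with the natural lift.
\item Show $h=1$. The inclusion of the core of $W_0$ is not isotopic to its $h$-translate precomposed with $g^n$ when $h\neq 1$. To see this, compare the kernels of $\pi_1(\partial P_N)\to\pi_1(P_N\setminus\text{core})$, using $\pi_1(\partial W)\neq 1$.
\item Cut off near the track of $W_0$. This gives a compactly supported isotopy, from $g^n$ on $W_0$ (identity elsewhere) to a diffeomorphism that is the identity on $W_0$, inside some $P_N=\bigcup_{i\le N}g_iW$.
\item By flagness, $P_N\cong W_0\natural\cdots\natural W_N$ compatibly with the chambers.
\item A Mazur manifold embeds in $B^4$. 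Embedding $W_1,\dots,W_N$ into balls gives an embedding $P_N\hookrightarrow W$ that is the identity on the core of $W_0$.
\item Transporting the isotopy yields an isotopy rel $\partial W$ from $g^n$ to a diffeomorphism supported in a ball, contradicting \cite[Theorem D]{KLMME1}.
\end{enumerate}
So the inputs you actually need are non-locality of every nonzero power of $g$ (not merely $g$ nontrivial rel boundary) and the embedding $W\hookrightarrow B^4$; neither appears in your plan.

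For infinitely many examples you also need an invariant separating the resulting manifolds; varying $\Gamma'$ or the Mazur manifold is not justified as it stands. The paper instead attaches genus-$g$ handles to $W$, which keeps asphericity and the embedding in $B^4$, and uses $b_1\ge 2g$.
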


\noindent That is, Theorem \ref{thm:main_intro} yields a $\mathbb{Z}$ subgroup in the kernel of the canonical map
\begin{align}
\pi_0 \mathrm{Diff}(M) \to \pi_0 \mathrm{Homeo}(M) \times \pi_0 \widetilde{\mathrm{Diff}}(M).\label{canonicalmap}
 \end{align}
(Here $\pi_0 \mathrm{Homeo}(M), \pi_0 \mathrm{Diff}(M),\pi_0 \widetilde{\mathrm{Diff}}(M)$ stand for the topological, smooth and smooth pseudo-isotopy mapping class groups, respectively.) The kernel elements we detect are obtained by a composition of \textit{Dehn twists} along \textit{Seifert-fibered} $3$-manifolds smoothly embedded in a closed aspherical $4$-manifold $M$ obtained from the Davis reflection construction \cite{Dav83}; see \S \ref{subsection:intro_davis} below for a brief explanation. For context, we also mention that Budney--Gabai \cite{BG} have obtained a related result when $d=4$: for a closed hyperbolic $4$-manifold $M$, they show that the map 
\[
\pi_0 \mathrm{Diff}(M) \to  \mathrm{Out}(\pi_1 (M)) \times \pi_0 \widetilde{\mathrm{Diff}}(M)
\]
has a large kernel, but the kernel elements found therein (barbell diffeomorphisms) are \textit{not} trivial in $\pi_0 \mathrm{Homeo}(M)$.\\

A remarkable feature of Theorem \ref{thm:main_intro} is that it exhibits a phenomenon in smooth closed aspherical manifolds of dimension $d=4$ that cannot occur in smooth closed aspherical manifolds of high dimension. Namely, in our examples of Theorem \ref{thm:main_intro} we can ensure that $\pi_1 (M)$ is center-less and satisfies the Farrel--Jones conjectures. In turn, when $\pi_1 (M)$ has these two properties but $M$ is a closed smooth aspherical manifold of dimension $d\geq 9$, then the canonical map (\ref{canonicalmap}) is \textit{injective} (Proposition \ref{proposition:highdim}). See \S \ref{subsection:highdim} for more details.\\



Another salient aspect of Theorem \ref{thm:main_intro} stems from the fact (cf. Theorem \ref{thm:aspherical-irreducible}) that a smooth closed aspherical $4$-manifold $M$ is \textit{irreducible}: if $M$ is diffeomorphic to a connected sum $M_1 \# M_2$ then either $M_1$ or $M_2$ is a homotopy $4$-sphere. The question of whether closed irreducible $4$-manifolds can admit exotic diffeomorphisms has attracted a great deal of attention, with the first examples appearing only recently \cite{BKirreducible,KLMME3}. On the other hand, whether the diffeomorphisms in those examples have \textit{infinite} order in the smooth mapping class group remains unclear. In turn, Theorem \ref{thm:main_intro} yields the first examples with this property:
\begin{corollary}
There exist closed oriented irreducible $4$-manifolds that admit exotic diffeomorphisms with infinite order.
\end{corollary}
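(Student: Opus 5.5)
The corollary follows directly by combining Theorem \ref{thm:main_intro} with the irreducibility of smooth closed aspherical $4$-manifolds (Theorem \ref{thm:aspherical-irreducible}, stated later in the paper and assumed here). Let $M$ and $f : M \to M$ be as provided by Theorem \ref{thm:main_intro}. Then $M$ is closed, orientable and aspherical, and we fix an orientation once and for all. There are three things to check: that $f$ is an exotic diffeomorphism, that it has infinite order, and that $M$ is irreducible.

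The first two are immediate from the theorem. Item (1) of Theorem \ref{thm:main_intro} says that $f$ is topologically isotopic to the identity. Item (3) with $n=1$ says that $f$ is not smoothly isotopic to the identity. Together these are exactly the definition of an exotic diffeomorphism. Item (3) for all $n\neq 0$ says that $[f]$ generates a copy of $\mathbb{Z}$ in $\pi_0\mathrm{Diff}(M)$. It also lies in the kernel of the map to $\pi_0\mathrm{Homeo}(M)$. Hence $f$ has infinite order in the smooth mapping class group, and indeed in the kernel of $\pi_0\mathrm{Diff}(M)\to\pi_0\mathrm{Homeo}(M)$.

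For irreducibility, we invoke Theorem \ref{thm:aspherical-irreducible}. If it needs re-deriving, the argument I would give runs as follows. Suppose $M \cong M_1 \# M_2$. By van Kampen, $\pi_1(M)=\pi_1(M_1)*\pi_1(M_2)$.
\begin{itemize}
\item Asphericity of $M$ makes $\pi_1(M)$ torsion-free of cohomological dimension $4$ and a Poincaré duality group. A nontrivial free product cannot be a $PD_4$ group unless one factor is trivial: a $PD^n$ group with $n\ge 2$ has one end, whereas a nontrivial free product has infinitely many ends, or exactly two if it is $\mathbb{Z}/2*\mathbb{Z}/2$, which is excluded by torsion-freeness. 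So, say, $\pi_1(M_2)=1$.
\item The universal cover of $M$ is then the connected sum of $\widetilde{M_1}$ with copies of $M_2$, one for each element of $\pi_1(M)$. Since this cover is contractible, Mayer--Vietoris forces $H_2(M_2)=0$. Thus $M_2$ is a simply connected homology sphere, hence a homotopy $4$-sphere.
\end{itemize}
So $M$ is irreducible in the sense stated in the introduction. Combining the three points gives the corollary, with infinitely many examples since Theorem \ref{thm:main_intro} provides infinitely many $M$.

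There is no real obstacle here; all the difficulty lives in Theorem \ref{thm:main_intro} itself. The only point needing care is the ends/free-product step for $PD_4$ groups, which is classical.
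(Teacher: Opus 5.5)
Your proof is correct and is the paper's own deduction: Theorem \ref{thm:main_intro} supplies the closed orientable aspherical $M$ and the infinite-order exotic $f$, and Theorem \ref{thm:aspherical-irreducible} supplies irreducibility. Your optional re-derivation of irreducibility is also valid but takes a different route. You use one end for $PD^4$ groups versus infinitely many for a nontrivial torsion-free free product, then Mayer--Vietoris on $\widetilde{M_1}\#_{\pi_1(M)} M_2$. The paper instead gets $\pi_2=\pi_3=0$ for both summands from collapse maps, and when both fundamental groups are infinite it reaches a contradiction via $H^4(-;\mathbb{Z}/2)$ of $K(A*B,1)\simeq K(A,1)\vee K(B,1)$.
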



A further feature of our construction is that exoticity can persist under passage to covering spaces. Although the closed manifolds considered above are not simply connected, we obtain an exotic diffeomorphism that admits an exotic lift to every connected cover, including the universal cover. Since the underlying manifold is aspherical, its universal cover is a contractible open $4$-manifold. Thus the failure of smooth isotopy to the identity persists even after passing to a contractible covering space. More precisely, we prove the following.

\begin{theorem}
    \label{thm:exotic-on-every-cover}
There exists a closed irreducible aspherical smooth $4$-manifold $M$ and a diffeomorphism
$f\in \operatorname{Diff}(M)$ such that $f$ is topologically isotopic, but not smoothly isotopic, to
$\id_M$, with the following property. For every connected covering map $p:\widehat M\to M$, the diffeomorphism $f$ lifts to an exotic diffeomorphism on $\widehat M$.
\end{theorem}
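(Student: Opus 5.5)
Since $\widetilde M$ is contractible, it is enough to make the exoticity of $f$ visible on the compact contractible piece from which $M$ is built; that piece lifts homeomorphically to every cover.

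\textbf{Construction.} Take the Mazur manifold $W$ and the diffeomorphism $g$ of \cite[Theorem D]{KLMME1}. The map $g$ is supported in the interior of $W$, is topologically isotopic to the identity rel $\partial W$, and is not smoothly isotopic to the identity rel $\partial W$. By \cite[Theorem D]{KLMME1}, non-triviality of $g$ is detected by a families Seiberg--Witten / Bauer--Furuta type invariant of $W$ rel boundary.

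\textbf{Davis reflection and lifting.} Give $\partial W$ a mirror structure (a triangulation whose dual gives a flag complex), and form $M=\mathcal U(\Gamma,W)$, a quotient of $\Gamma\times W$ by the standard identifications. Here $\Gamma$ is a torsion-free finite-index subgroup of the right-angled Coxeter group. Let $f$ be the extension of $g$ from one chamber by the identity.
\begin{enumerate}
\item Topological isotopy of $f$ to $\mathrm{id}_M$ follows from the topological isotopy of $g$ rel $\partial W$.
\item Since the chamber $W$ is simply connected, it lifts homeomorphically to every cover $p:\widehat M\to M$.
\item The lift $\widehat f$ is taken to be $g$ on one lift of $W$ and the identity elsewhere. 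The topological isotopy lifts by the covering homotopy property.
\end{enumerate}
Irreducibility and asphericity of $M$ are Davis' theorem together with Theorem~\ref{thm:aspherical-irreducible}. Since every cover inherits asphericity, it suffices to treat the universal cover $\widetilde M=\mathcal U(W_\Gamma,W)$, the full Davis complex, and the finite covers. I expect one uniform argument to cover all of them.

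\textbf{Main obstacle: non-triviality on a noncompact cover.} Suppose $\widehat f$ were smoothly isotopic to the identity through a path $f_t$. I would not try to extract a compactly supported isotopy. Instead I would localize the gauge-theoretic invariant.
\begin{itemize}
\item Use the reflection in a wall to build a compact or cylindrical-end piece containing $W$ on which the invariant of \cite{KLMME1} is defined.
\item Stretch the neck along a Seifert-fibered $3$-manifold. Near each wall the manifold looks like a double along $\partial W$, so $\partial W$ itself is the natural neck. I would show that this $3$-manifold has no reducible or irreducible solutions that could interfere, using the positive scalar curvature or $L$-space property available for the Mazur boundaries of \cite{KLMME1}.
\end{itemize}

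Alternatively, I would realize $g$ as a product of Dehn twists along Seifert-fibered $3$-manifolds $Y\subset W$, and show that the Dehn twist along the lift $\widehat Y\subset\widehat M$ is detected by a family invariant on a neighborhood of $\widehat Y$. This neighborhood is a compact region containing the lifted chamber, so the argument runs identically for finite covers and for $\widetilde M$. Proving that this localization survives an isotopy that may have noncompact support is the step I expect to be hardest. My first attempt would be to exhaust $\widehat M$ by finite unions of chambers and appeal to compactness of the isotopy's track on $[0,1]\times W$.
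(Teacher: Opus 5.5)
Your proposal leaves the one substantive step unproved, and the route you sketch for it does not work. Write $X$ for your Mazur manifold, to avoid a clash with the Coxeter group $W$. What must be shown is that the lift $\widehat f$ is not smoothly isotopic to $\id_{\widehat M}$ on an arbitrary, possibly non-compact, cover. You construct no invariant of $\widehat M$, or of a compact piece of it, that is unchanged under an arbitrary isotopy of $\widehat M$.

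The neck-stretching input you invoke is also false. The Casson--Harer Brieskorn spheres $\Sigma(p,ps\pm1,ps\pm2)$ are aspherical $\widetilde{\mathrm{SL}}_2(\mathbb{R})$-manifolds, so they admit no metric of positive scalar curvature. They are not $L$-spaces either: the only Seifert-fibered integer homology sphere $L$-spaces are $S^3$ and $\pm\Sigma(2,3,5)$.

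There is also an error in the lifting step. Suppose $f$ is $g$ on one chamber $C\subset M$ and the identity elsewhere. A map that is $g$ on a single component of $p^{-1}(C)$ and the identity elsewhere does not satisfy $p\circ\widehat f=f\circ p$. The actual lift, which is also the endpoint of the lifted topological isotopy, is $g$ on \emph{every} component of $p^{-1}(C)$, so it has non-compact support on infinite covers.

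Finally, your reduction to the universal and finite covers is unjustified as stated. Lifting an isotopy $\id\simeq\widehat f$ from $\widehat M=D(X)/H$ to $D(X)$ only gives $\id\simeq h\circ\widetilde f$ for some deck transformation $h\in H$.

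The missing ideas are topological; no gauge theory beyond \cite[Theorem D]{KLMME1} is needed. The paper's argument runs as follows.
\begin{enumerate}
\item Rule out $h\neq 1$. On a core $\mathring X_0$ of the fundamental chamber, $h\circ\widetilde f$ restricts to $h\circ j\circ g$. Lemma~\ref{lem: j and j' nonisotopic} shows this is not isotopic to the inclusion $j$; the proof uses $\pi_1(\partial X)\neq 1$ and the boundary-sum structure of $P_N$.
\item Cut off the isotopy. The track $\bigcup_t \widetilde F_t(X_0)$ is compact, so isotopy extension yields a compactly supported isotopy of $D(X)$ agreeing with $\widetilde F_t$ on $X_0$.
\item Re-embed into $X$. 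The support lies in some $P_n\cong X_0\natural\cdots\natural X_n$ (Proposition~\ref{prop:boundarysum}). Since $X$ embeds in $B^4$, you can embed $P_n$ back into $X$ by sending $X_1,\dots,X_n$ into a ball.
\item Conclude. This converts the assumed isotopy into an isotopy rel $\partial X$ from $g$ to a diffeomorphism supported in a ball.
\end{enumerate}

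For the contradiction you need the non-locality form of \cite[Theorem D]{KLMME1}: $g$ is not isotopic rel $\partial X$ to any diffeomorphism supported in a ball. The weaker statement you record, that $g$ is not isotopic to the identity rel $\partial X$, does not suffice, because $\pi_0\mathrm{Diff}(B^4,\partial B^4)$ is unknown.

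Your idea of exhausting by finite unions of chambers and using compactness of the track is the right starting point. Without the deck-transformation lemma and the re-embedding $P_n\hookrightarrow X$, though, it does not close. Once the lift is corrected, your single-chamber $f$ would serve as well as the paper's chamberwise $f=Q(\tau)$, since the argument only uses the restriction to $X_0$.
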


\subsection{Diffeomorphisms and the Davis reflection construction}\label{subsection:intro_davis}

We now describe the construction of diffeomorphisms that we study and our main result (Theorem \ref{thm:main_injective_intro}) which is used to prove Theorem \ref{thm:main_intro}).

We consider Davis' reflection trick \cite{Dav83} (see \S\ref{section:background} for a more detailed summary). This takes as input a smooth compact $d$-manifold $X$, which we assume here to be oriented, with non-empty boundary, equipped with a flag triangulation $\mathcal{T}$ of the boundary manifold $\partial X$. One then considers the cell decomposition $\mathcal{T}^\ast$ of $\partial X$ which is dual to $\mathcal{T}$, and reflects $X$ across the faces of $\mathcal{T}^\ast$ to produce a non-compact oriented smooth $d$-manifold denoted $D(X)$. The manifold $D(X)$ is naturally acted upon by a right-angled Coxeter group $W$ with fundamental domain $X$. Choose a finite-index torsion-free subgroup $W_0 \leq W$ which acts preserving the orientation of $D(X)$ (as it turns out, the commutator subgroup $W_0 = [W,W]$ satisfies these assumptions). The quotient $Q(X) = D(X)/W_0$ yields a smooth closed oriented $d$-manifold. Crucially, $Q(X)$ is aspherical provided $X$ is.

A diffeomorphism $\tau$ of $X$ supported in its interior can similarly be reflected and then taken to the quotient: this yields a diffeomorphism of $Q(X)$ denoted $Q(\tau)$. The construction just described yields a natural map 
\begin{align}
\pi_0 \mathrm{Diff}(X, \partial X) \xrightarrow{Q} \pi_0 \mathrm{Diff}(Q(X)), \label{Q_intro}
\end{align}
with corresponding variants for the mapping class groups $\pi_0 \mathrm{Homeo}(X, \partial X), \pi_0 \widetilde{\mathrm{Diff}}(X, \partial X)$ also. We establish the following injectivity result for the map (\ref{Q_intro}) (see Theorem \ref{theorem:main1} and Corollary \ref{cor:injective} for more precise statements):
\begin{theorem}\label{thm:main_injective_intro}
Suppose both of the following conditions on $X$ hold:
\begin{enumerate}
\item $X$ smoothly embeds in a ball $B^d$ ($d = \dim X$), and
\item either $X$ is contractible with non-simply-connected boundary, or $H_k (X, \mathbb{Z}) \neq 0$ in some degree $k >0$.
\end{enumerate}
Then
\begin{itemize}
\item if $d\neq 4$, the kernel of (\ref{Q_intro}) coincides with the subgroup of \textit{local} diffeomorphisms: diffeomorphisms of $X$ supported in a smoothly embedded ball $B^d \subset X$
\item if $d=4$, the kernel of (\ref{Q_intro}) is \textit{contained in} the subgroup of local diffeomorphisms.
\end{itemize}
\end{theorem}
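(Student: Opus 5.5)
The plan is to show that if $Q(\tau)$ is smoothly isotopic to the identity on $Q(X)$, then $\tau$ is isotopic rel boundary to a diffeomorphism supported in a ball. The converse inclusion for $d\neq 4$ is separate. For that direction: a local diffeomorphism is supported in a ball $B\subset X$. Reflecting it gives a diffeomorphism of $Q(X)$ supported in a disjoint union of balls in the copies of $X$. In $d\neq 4$, a diffeomorphism of a ball rel boundary becomes trivial once we enlarge the ball and use the Alexander-type trick available inside the closed manifold (or the fact that any local diffeomorphism is isotopic to a connected sum with an element of $\pi_0\mathrm{Diff}(B^d,\partial)$, and these cancel in pairs of opposite-orientation copies). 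So I expect the real content to be the kernel inclusion.

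\textbf{Step 1: reduce to a single chamber.} Suppose $Q(\tau)$ is isotopic to $\id$. Lift the isotopy to $D(X)$, starting at the identity. The lift of $Q(\tau)$ through this isotopy is a $W_0$-equivariant map, and it agrees with the reflected diffeomorphism $D(\tau)$ up to a deck transformation; the isotopy forces that deck transformation to be trivial. Now use hypothesis (1): $X\subset B^d$. Fold $D(X)$ onto the chamber $X$ via the retraction $D(X)\to X$. More usefully, embed a neighbourhood of $X$ in $D(X)$ into $\mathbb{R}^d$ by reflecting the embedding $X\subset B^d$ across finitely many faces. The aim is an isotopy of $D(\tau)|$ restricted to a compact union of chambers $K\supset X$, rel $\partial K$, to the identity. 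Compact support comes from a standard properness argument: the isotopy on $Q(X)$ is compactly supported, so its lift moves points a bounded distance.

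\textbf{Step 2: extract the chamber.} Hypothesis (2) guarantees that the copies of $X$ in $D(X)$ are homotopically "essential": they carry nontrivial homology, or their boundary carries nontrivial $\pi_1$. An isotopy of $D(\tau)$ to the identity over $K$ can then be made to respect the separating hypersurfaces given by the chamber walls, up to isotopy. I would attempt this via a transversality / pseudo-isotopy argument. The goal is to show that $\tau$ and the identity become isotopic rel $\partial X$ after forming the boundary connected sum of $X$ with something contained in a ball. Concretely, $D(\tau)$ on $K$ equals $\tau\sqcup\id$ on the other chambers. The isotopy to $\id$ on $K\subset \mathbb{R}^d$ shows that $\tau$ extended by the identity to $B^d$ is isotopic to $\id$ rel $\partial B^d$, modulo the effect of the other chambers.

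\textbf{Main obstacle.} The hard step is turning "$\tau\cup\id$ is trivial on a larger region" into "$\tau$ is local". I would try the following: the complement $B^d\setminus X$ is where the isotopy can escape. Using condition (2), show that the isotopy can be pushed off all but one ball-like region. Equivalently, show that the restriction map on mapping class groups has kernel consisting of diffeomorphisms isotopic into a collar plus a ball. In dimension $\neq 4$ one can use the $h$-cobordism / Cerf–Palais machinery to straighten the isotopy near the walls. In dimension $4$ these tools fail, so only containment (not equality) is claimed.
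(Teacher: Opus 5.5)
There is a genuine gap. The inclusion $\ker Q\subseteq$ local is the real content of the statement, and you never prove it. You call it the ``main obstacle'' and suggest transversality, pseudo-isotopy and $h$-cobordism/Cerf--Palais arguments to ``straighten the isotopy near the walls.'' You also concede these fail for $d=4$. But containment is exactly what is claimed for $d=4$, so such an approach cannot prove the statement; the $d=4$ restriction concerns the other inclusion.

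Several intermediate claims are also wrong:
\begin{itemize}
\item $D(\tau)$ acts as $\tau$ on \emph{every} chamber, so it is not $\tau\sqcup\id$ on $K$. That description fits $D(\tau)_0$.
\item The lift ends at $D(\tau)$, which is not compactly supported. No displacement bound makes the lifted isotopy compactly supported; it has to be cut off.
\item Embedding all of $K$, including $X_0$, into $\mathbb{R}^d$ only tells you about $\tau$ extended to $B^d$, i.e.\ about an element of $\pi_0\mathrm{Diff}(B^d,\partial B^d)$. That says nothing about locality in $X$.
\end{itemize}

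The missing idea is elementary and dimension-free.
\begin{enumerate}
\item Cut off. Apply isotopy extension to the compact track $\bigcup_t\widetilde F_t(X_0)$ to get a compactly supported $H_t$ with $H_0=\id$ and $H_1|_{X_0}=\tau$. Then $G_t=H_t^{-1}\circ D(\tau)_0$ is a compactly supported isotopy from $D(\tau)_0$ to a diffeomorphism that is the identity on $X_0$, though not necessarily on the other chambers.
\item Re-embed. Choose $n$ with $\mathrm{supp}(G_t)\subset\interior P_n$. The flag condition gives $P_n\cong X_0\natural X_1\natural\cdots\natural X_n$, compatibly with the inclusions of $\interior X_i$. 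Apply hypothesis (1) only to the chambers $X_i$ with $i\neq 0$. This gives an embedding $P_n\hookrightarrow X_0\natural B^d\natural\cdots\natural B^d\cong X$ that is the identity on $\interior X_0$.
\item Transport $G_t$ through this embedding. You get an isotopy rel $\partial X$ from $\tau$ to a diffeomorphism supported in the ball $B^d\natural\cdots\natural B^d$.
\end{enumerate}

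Hypothesis (2) is used in the wrong place. The extraction step does not need it. It is what forces the deck transformation $g$ in $\widetilde F_1=g\circ D(\tau)$ to be trivial, which you merely assert. Without such input the assertion fails: for $X=D^n$ and $Q(X)=T^n$, the lift of the loop of translations in direction $e_1$ ends at the nontrivial deck transformation $a_1b_1$.

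The paper's argument runs as follows. If $g=g_a\neq 1$, the embeddings $j\colon\mathring X\to D(X)$ and $g\circ j\circ\tau$ would be ambiently isotopic inside some $P_N$ rel $\partial P_N$, carrying $\mathring X_0$ to $\mathring X_a$. In the homology case this is impossible because $H_k(\mathring X_0)$ and $H_k(\mathring X_a)$ have different images in $H_k(P_N)$. In the contractible case, the maps $\pi_1(\partial P_N)\cong G_0*\cdots*G_N\to\pi_1(P_N\setminus\mathring X_0)$ and $\to\pi_1(P_N\setminus\mathring X_a)$ kill all free factors except $G_0$, resp.\ $G_a$, so their kernels differ. (Alternatively, equivariance shows $g$ centralizes $W_0$, so $Z(W_0)=1$ would suffice, but you would need to prove that.)

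Finally, for local $\subseteq\ker Q$ when $d\neq 4$, drop the ``Alexander-type trick.'' It does not exist smoothly: for $d\geq 5$, $\pi_0\mathrm{Diff}(B^d,\partial B^d)\cong\Theta_{d+1}$, and ball-supported diffeomorphisms of, e.g., $S^6$ need not be isotopic to the identity. Your pairing idea is the right one, but it needs three ingredients:
\begin{itemize}
\item All $|W:W_0|$ ball copies can be isotoped into $X_0$, arriving with reversed orientation exactly when $L(g)$ is odd.
\item Reversal happens for exactly half of the cosets in $W/W_0$.
\item Conjugation by a reflection acts as inversion on $\pi_0\mathrm{Diff}(B^d,\partial B^d)$. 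This is trivial for $d\leq 3$ and follows from orientation reversal in $\Theta_{d+1}$ for $d\geq 5$.
\end{itemize}
That last fact is unknown for $d=4$, and it is the actual reason only containment is claimed there.
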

In the case $d=4$ our previous work \cite[Theorem D]{KLMME1}, based on the family Seiberg--Witten invariants, gave examples of smooth contractible Mazur $4$-manifolds $X$ bounding non-trivial Brieskorn integral homology $3$-spheres for which the \textit{boundary Seifert-fibered Dehn twist} $\tau_{\partial X} \in \pi_0 \mathrm{Diff} (X, \partial X)$ is (1) topologically isotopic to the identity map relative to the boundary, (2) trivial as a pseudo-diffeomorphism of $X$ relative to the boundary, and (3) no non-zero power of $\tau_{\partial X}$ is smoothly isotopic relative to the boundary to a diffeomorphism supported in a ball $B^4 \subset X$. Here, items (1) and (2) are consequences of the results of Orson--Powell \cite[Theorem A and Theorem 3.1]{OP}. Furthermore, $X$ smoothly embeds in $B^4$ by the Mazur property. Applying Theorem \ref{thm:main_injective_intro} to this $X$ and $\tau = \tau_{\partial X}$ thus proves Theorem \ref{thm:main_intro} with $M = Q(X)$ and $f = Q(\tau_{\partial X} )$. (See \S \ref{subsection:examples} for other examples, and for the construction of infinitely-many examples of Theorem \ref{thm:main_intro}).

\subsection*{Acknowledgements} The authors thank Samuel Muñoz-Echániz for pointing out Lemma \ref{lemma:samu} and explaining its proof. The authors also thank Gheehyun Nahm and Rohil Prasad for holding valuable conversation regarding the article. HK is partially funded by JSPS KAKENHI Grant Numbers 25K00908, 26K22244. JL is partially supported by  National Key R\&D Program of China 2025YFA1017500 and NSFC 12271281. AM is partially funded by NSF DMS-2405270. 

\subsection*{AI Disclosure}
The literature search that went into the proof of Lemma \ref{lemma:center-FJ} was done using GPT 6-Astra.

\section{Background on the Davis reflection construction}\label{section:background}

In this section we review some aspects of the Davis reflection construction that we shall use, following
Davis~\cite{Dav83} and Davis--Hayden--Huang--Ruberman--Sunukjian
\cite{DHHRS}. Our notation follows closely that in \cite{DHHRS}.

\subsection{The Davis manifold}

Let $X$ be a compact smooth $d$-dimensional manifold with non-empty compact boundary $\partial X$. Let $\mathcal{T}$
be a smooth triangulation of $\partial X$, with $V$ denoting its set of
vertices of $T$. Following \cite{Dav83}, consider the right-angled Coxeter group
\[
W=
\left\langle
v\in V
\ \middle|\
v^2=1,\ (vw)^2=1
\text{ if $v,w$ are joined by an edge of $\mathcal{T}$}
\right\rangle .
\]

Let $\mathcal{T}^\prime$ denote the smooth cell decomposition of $\partial X$ which is \textit{dual} to the triangulation $\mathcal{T}$. This gives $X$ the structure of a smooth manifold-with-corners, where the top dimensional strata in $\partial X$ are the top dimensional cells of $\mathcal{T}^\prime$. For each vertex $v\in V$, let $X_v$ denote the closed cell of $\mathcal{T}^\prime$
dual to the vertex $v$. For $x\in\partial X$, let $W_x \leq W$ be the subgroup generated by all vertices $v \in V$ with $x \in X_v$. Following \cite[\S 13]{Dav83}, we consider the following

\begin{definition}
The \emph{Davis complex} associated to $X$ is the quotient space 
\[
D(X)=(W\times X)/{\sim} \, ,\]
where $(h,x)\sim(g,y)$ if $ x=y $ and $ h^{-1}g\in W_x.$
\end{definition}

In other words, $D(X)$ is obtained by
reflecting a copy of $X$ across each face of the cell decomposition $\mathcal{T}^\prime$. It is shown in \cite[\S 17]{Dav83} (see also~\cite[\S 3.2]{DHHRS}) that $D(X)$ is naturally a smooth non-compact manifold, with a smooth proper action of the Coxeter group $W$ acting with $X$ as a closed fundamental domain (the action of $w \in W$ sends $[(g,x)] \in D(X)$ to $[(wg,x)] \in D(X)$). Thus, we can write $D(X)$ as a union of `chambers'
\[
D(X)=\bigcup_{g\in W}gX.
\]

Let $L\colon W\rightarrow \mathbb{N}$ denote the word length with respect to the generating set $V$ of $W$. When $X$ is oriented then $D(X)$ inherits a canonical orientation from the orientation of its fundamental chamber $X$. We note that the chamber $gX \subset D(X)$ is canonically identified as a smooth manifold oriented by the orientation of $D(X)$, with either $X$ or $-X$ according as to whether $L (g)$ is even or odd. 

It will be convenient to order the chambers $gX$ as follows: choose a total ordering
\[
W=\{g_0 = 1, g_1,g_2,\ldots\}
\]
such that $i \leq j \implies L (g_i) \leq L (g_j )$, and set $X_i=g_i X$. We define $P_n :=\bigcup_{i=0}^nX_i$, which is a compact codimension zero submanifold-with-corners of $D(X)$ (see \cite[\S 3.2]{DHHRS}). Thus, we obtain an exhaustion of $D(X)$ by smooth compact manifolds-with-corners:
\[
D(X)=\bigcup_{n\geq 0}P_n \,.
\]

The next result, Proposition \ref{prop:boundarysum}, concerns a decomposition of $P_n$ that will be crucially used in our later arguments. Whenever we appeal to this decomposition, it will be understood that we regard $P_n$ as a smooth compact manifold-\textit{with-boundary} (which, by abuse notation, we continue to denote this as $P_n$) by  smoothing corners in the original $P_n$. The following result is obtained by a straightforward modification of the proof of \cite[Lemma 3.2]{DHHRS}, after smoothing corners.

\begin{proposition}\label{prop:boundarysum}
Let $\interior{X}$ be the complement of a  compact collar neighborhood of $\partial X \subset X$. Let $\interior{X}_{i}$ be the image of $\interior{X}$ under $g_{i}$. Suppose that $\mathcal{T}$ is flag\footnote{A smooth triangulation $\mathcal{T}$ is said to be \textit{flag} if every finite collection of pairwise adjacent vertices of $\mathcal{T}$ spans a simplex. This condition holds, for example, after taking the barycentric subdivision of any $\mathcal{T}$.}. Then for for every $n\geq 0$, there is a diffeomorphism of manifolds-with-boundary 
\[
\phi_n : P_n \longrightarrow X_0 \natural\cdots\natural X_n \,,
\]
where $\natural$ denotes boundary connected sum, such that the diagram 
\[
\begin{tikzcd}
 P_n \arrow{r}{\phi_n} & X_0 \natural \cdots \natural X_n\\
\interior{X_i} \arrow{u} \arrow{ru} & \, 
\end{tikzcd}
\]
commutes for each $i = 0 , \ldots n $, where $\interior{X_i} \to P_n$ and $\interior{X_i}\to X_1 \natural \cdots \natural X_n$ denote the canonical inclusions.
\end{proposition}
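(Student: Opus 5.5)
We argue by induction on $n$, following the proof of \cite[Lemma 3.2]{DHHRS}. For $n=0$ we have $P_0 = X_0 = X$, and $\phi_0$ is the identity after smoothing corners. Since $\interior{X}$ lies away from the collar, the triangle commutes trivially.

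For the inductive step, write $P_n = P_{n-1}\cup X_n$ and study the intersection $F_n := P_{n-1}\cap X_n \subset \partial X_n$. The key combinatorial input is the standard fact about right-angled Coxeter groups with the length-compatible ordering. Since $L(g_i)\le L(g_n)$ for $i<n$, the chambers of $P_{n-1}$ adjacent to $X_n$ are exactly the $g_n v X$ with $v$ in the descent set
\[
\mathrm{In}(g_n)=\{v\in V : L(g_n v)<L(g_n)\}.
\]
These $v$ pairwise commute, because each is a last letter of a reduced word for $g_n$. In a right-angled Coxeter group, pairwise commuting generators correspond to pairwise adjacent vertices of $\mathcal{T}$. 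By flagness they span a simplex $\sigma_n$ of $\mathcal{T}$, and $\mathrm{In}(g_n)$ is nonempty when $n\ge 1$. Hence
\[
F_n = g_n\Bigl(\bigcup_{v\in \sigma_n} X_v\Bigr).
\]
This set is the union of the dual cells of the vertices of a simplex in $\partial X$, i.e.\ the closed star of the dual cell of $\sigma_n$ in $\mathcal{T}^\prime$. That star is a regular neighborhood of a point (the barycenter of $\sigma_n$), so $F_n$ is a codimension-zero PL, and after smoothing, smooth, $(d-1)$-disk in $\partial X_n$. One also checks that $F_n$ is a disk in $\partial P_{n-1}$: it is a union of faces of chambers of $P_{n-1}$ and is not contained in the interior.

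Consequently, after smoothing corners, $P_n$ is obtained from $P_{n-1}$ and $X_n$ by gluing along a $(d-1)$-disk in their boundaries, which is precisely a boundary connected sum $P_{n-1}\natural X_n$. Combining with the inductive diffeomorphism $\phi_{n-1}$ gives
\[
\phi_n : P_n \cong (X_0\natural\cdots\natural X_{n-1})\natural X_n .
\]
Here we use that a boundary sum is well defined up to diffeomorphism once the disks are chosen, and that $\phi_{n-1}$ carries the gluing disk to some disk in the boundary. Any two disks in a connected boundary component are isotopic, so the result is independent of choices. If $\partial X$ is disconnected, the disk $F_n$ lies in a specific boundary component, and the resulting sum is still a valid boundary sum.

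For commutativity with the inclusions of $\interior{X_i}$, all corner smoothings and all gluing identifications take place inside the collar neighborhoods of $\partial X_i$. Likewise, the isotopies moving gluing disks are supported near the boundary. So $\phi_n$ can be chosen to restrict to the identity on each $\interior{X_i}$, $i\le n$. The induction preserves this, since $\phi_n$ restricts to $\phi_{n-1}$ on $P_{n-1}$ away from a collar, and to the identity on $X_n$ away from its collar.

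The main obstacle is the smoothing step. We must check that $P_{n-1}\cup_{F_n}X_n$, with its corners along $\partial F_n$ and the cell corners, smooths to a boundary connected sum, and do this compatibly with the previous smoothing. I would handle it by first smoothing $X_n$ and $P_{n-1}$ separately, with the smoothing supported in a thin collar. In this collar, $F_n$ becomes a smoothly embedded disk with collared boundary, and the union along such a disk is the standard model of a $1$-handle attachment, i.e.\ a boundary sum.
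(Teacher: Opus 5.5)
Your proposal is correct and is essentially the argument the paper relies on. The paper simply cites the inductive proof of \cite[Lemma 3.2]{DHHRS}, in which $P_n=P_{n-1}\cup X_n$ is glued along the disk $g_n\bigl(\bigcup_{v\in\mathrm{In}(g_n)}X_v\bigr)$ (a disk by flagness), with corners smoothed inside collars so that the inclusions of the $\interior{X_i}$ are respected.

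One step needs tightening: identifying $P_{n-1}\cap X_n$. Besides the codimension-one neighbours $g_nvX$, you must also rule out lower-dimensional contact with earlier chambers $g_nwX$, $w\in W_y$. This follows because if no letter of $w$ lies in $\mathrm{In}(g_n)$, then $L(g_nw)=L(g_n)+L(w)>L(g_n)$, so $g_nw$ comes after $g_n$ in the ordering.
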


\subsection{A compact quotient}
To obtain a closed manifold, we now discuss a suitable covering space $D(X) \to Q(X)$. Recall that $W$ acts on $D(X)$ smoothly and properly but it does not act freely. The stabilizer of $x \in D(X)$ under this action is the subgroup $W_x \leq W$, which is a subgroup of $(\mathbb{Z}/2)^V$ and hence $W_x \cong (\mathbb{Z}/2)^k$ for some $k$. Choosing a finite-index torsion-free
subgroup $W_0\leq W$ (that such $W_0 \leq W$ exists is proved in
\cite[Corollary~5.2]{Dav83}; in fact, one can take $W_0=[W,W]$), we thus have that $W_0$ acts \textit{freely} on $D(X)$.

\begin{definition}
Let $Q(X)=D(X)/W_0$ be the smooth closed manifold obtained as the quotient of $D(X)$ by the smooth, proper and free action of $W_0 \leq W$.
 \end{definition}

We note that $Q(X)$ is closed since $W_0$ has
finite index in $W$. Furthermore, if $Q(X)$ is orientable if and only if $X$ is orientable and the subgroup $W_0 \leq W$ is \textit{orientation-preserving} in the sense that $W_0$ is contained in the kernel of the homomorphism $\chi : W \to \mathbb{Z}/2$ given by $\chi (g) = L(g) \, \textrm{mod} \, 2$. (For example, $W_0 = [W,W]$ is orientation-preserving.) In this case, $Q(X)$ inherits a canonical orientation from an orientation of the fundamental chamber $X_0 \equiv X$.

We have the following crucial result:
\begin{proposition}[Davis]
\label{prop:Davis-aspherical}
Suppose that $X$ is aspherical and $\mathcal{T}$ is flag. Then $Q(X)$ is aspherical.
\end{proposition}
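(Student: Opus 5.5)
Since $W_0$ acts freely and properly on $D(X)$, the projection $D(X)\to Q(X)$ is a covering map. The plan is therefore to show that $D(X)$ itself is contractible. A covering map induces isomorphisms on $\pi_k$ for $k\ge 2$, so if $D(X)$ is contractible then $\pi_k(Q(X))=0$ for all $k\geq 2$. Passing to the universal cover of $Q(X)$, which also covers $D(X)$, we then get that the universal cover of $Q(X)$ is contractible, i.e. $Q(X)$ is aspherical. (If $D(X)$ is not simply connected, the same argument applies verbatim to its universal cover.) The cleanest formulation is: show that $D(X)$ is aspherical, and then use that asphericity passes to quotients by free proper actions.

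To prove $D(X)$ is aspherical, I use the exhaustion $D(X)=\bigcup_n P_n$ together with Proposition \ref{prop:boundarysum}, which needs $\mathcal{T}$ flag. It gives $P_n \cong X_0\natural\cdots\natural X_n$, and each $X_i$ is diffeomorphic to $\pm X$. A boundary connected sum deformation retracts onto a wedge $X_0\vee\cdots\vee X_n$: glue along a $(d-1)$-disc in the boundary, which is contractible. A wedge of aspherical spaces is aspherical, since its universal cover is a tree of copies of the universal covers of the pieces (or by Whitehead's theorem on graphs of aspherical spaces with trivial edge groups). Hence each $P_n$ is aspherical, with $\pi_1(P_n)$ the free product of $n+1$ copies of $\pi_1(X)$.

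Next I pass to the union. Each sphere $S^k\to D(X)$ and each nullhomotopy of it has compact image, so it lies in some $P_n$. Thus $\pi_k(D(X))=\varinjlim_n \pi_k(P_n)=0$ for $k\ge2$, and $D(X)$ is aspherical. The compatibility of the inclusions $P_n\subset P_{n+1}$ with the identifications $\phi_n$ is irrelevant here: the vanishing of each $\pi_k(P_n)$ already kills the direct limit, whatever the structure maps are.

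The main subtlety is that $P_n$ in Proposition \ref{prop:boundarysum} is the corner-smoothed version. Smoothing corners does not change the homotopy type, and it yields a homeomorphism to the original $P_n\subset D(X)$. So the homotopy groups of the actual subsets $P_n$ of $D(X)$ vanish in degrees $\ge2$, and the compactness argument applies to these actual subsets. With that, $\pi_k(Q(X))\cong\pi_k(D(X))=0$ for $k\ge 2$, and $Q(X)$ is aspherical.
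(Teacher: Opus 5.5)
Your proof is correct and follows the paper's argument: Proposition~\ref{prop:boundarysum} gives $P_n\simeq X_0\vee\cdots\vee X_n$, so each $P_n$ is aspherical; the compact exhaustion $D(X)=\bigcup_n P_n$ then makes $D(X)$ aspherical, and the covering $D(X)\to Q(X)$ passes this to $Q(X)$. The only thing to tidy is your opening plan to show $D(X)$ is contractible, which is false in general when $\pi_1(X)\neq 1$ and which you rightly replace by asphericity of $D(X)$.
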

\noindent Indeed, following \cite[\S 3.3]{DHHRS}, by Proposition~\ref{prop:boundarysum} there is a homotopy-equivalence
\[
P_n\simeq X_0\vee\cdots\vee X_n,
\]
and hence $P_n$ is aspherical. Since the $P_n$ give a compact exhaustion of $D(X)$, then it follows that $D(X)$ is aspherical. Finally, since there is a covering space
$D(X)\to Q(X)$ then $Q(X)$ is aspherical.

\begin{example}[The $n$-torus] Let $X=D^n$, which we identify with the unit cube $X=[0,1]^n\subset \mathbb{R}^n$. Let $\mathcal{T}$ be the triangulation of $\partial X$ isomorphic to the boundary of the $n$-dimensional cross-polytope, with vertices $a_i$ and $b_i$, for $i=1,\ldots,n$, where $a_i$ and $b_i$ are the barycenters of the faces $\{x_i=0\}$ and $\{x_i=1\}$, respectively. Thus, a collection of these vertices spans a simplex precisely when it does not contain both $a_i$ and $b_i$ for any $i$. The $2n$ faces of the cube are then the $(n-1)$-dimensional cells of the dual cell decomposition $\mathcal{T}'$, with the cells dual to $a_i$ and $b_i$ being, respectively, the faces $\{x_i=0\}$ and $\{x_i=1\}$.

The associated right-angled Coxeter group is
\[
W=\left\langle a_1,b_1,\ldots,a_n,b_n\ \middle|\ a_i^2=b_i^2=1,\,[a_i,a_j]=[a_i,b_j]=[b_i,a_j]=[b_i,b_j]=1\text{ for }i\neq j\right\rangle .
\]
The generators $a_i$ and $b_i$ act by reflection across the corresponding faces of the cube. Repeatedly reflecting $X$ across its faces produces the standard tiling of $\mathbb{R}^n$ by unit cubes, and hence $D(X)=\mathbb{R}^n$.

Moreover,
\[
W=\langle a_1,b_1\rangle\times\cdots\times\langle a_n,b_n\rangle
\]
is identified with the direct product of $n$ copies of the infinite dihedral group. Taking
\[
W_0=\langle a_1b_1,\ldots,a_nb_n\rangle\cong\mathbb{Z}^n,
\]
the element $a_ib_i$ acts on $D(X)=\mathbb{R}^n$ as a translation of distance two in the $i$th coordinate direction. Thus $W_0$ is a finite-index, torsion-free and orientation-preserving subgroup of $W$, and quotienting out $D(X)$ by $W_0$ yields the $n$-torus $T^n$:
\[
Q(X)=D(X)/W_0=\mathbb{R}^n/(2\mathbb{Z})^n\cong T^n.
\]
\end{example}




\section{The aspherical manifold and its diffeomorphism}

We now discuss a class of diffeomorphisms of the Davis manifold $D(X)$ and its compact quotient $Q(X)$, state the main result of this article (Theorem \ref{theorem:main1}), and explain how this yields examples of exotic diffeomorphisms of closed aspherical $4$-manifolds.

\subsection{Diffeomorphisms of Davis manifolds}

In what follows we fix a smooth compact $d$-dimensional manifold $X$ with $\partial X \neq \emptyset$, a triangulation $\mathcal{T}$ of $\partial X$, and a torsion-free finite-index subgroup $W_0$ of the Coxeter group associated with $\mathcal{T}$. Let $X_0$ denote the chamber of $D(X) = \bigcup_{g \in W} gX$ corresponding to the identity element $g = 1$. 

\begin{definition}
If $f \in \mathrm{Diff}(X,\partial X)$ is a diffeomorphism of $X$, equal to the identity near the boundary, we associate to it three diffeomorphisms
\[
D(f)_0 \in \mathrm{Diff}_c (D(X)), \quad D(f)\in \mathrm{Diff}(D(X)), \quad Q(f) \in \mathrm{Diff}(Q(X)),
\]
constructed as follows. Let $D(f)_0 \in \mathrm{Diff}_c (D(X))$ be the compactly-supported diffeomorphism of $D(X) = \bigcup_{g \in W} gX$ which preserves each chamber of $D(X)$, acts on $X_0$ as $f$, and acts as the identity on every other chamber $gX$ ($g \neq 1$). Define $D(f) \in \mathrm{Diff}(D(X))$ to be the diffeomorphism of $D(X)$ with non-compact support which acts as $f$ on each chamber $gX \equiv X$ of $D(X)=\bigcup_{g \in W} gX$. In terms of the $W$ action on $D(f)$, we have
\[
D(f) = \prod_{g \in W} g \circ D(f)_0 \circ g^{-1} .
\]
Since $D(f)$ commutes with the $W$-action on $D(X)$ then $D(f)$ descends to a unique diffeomorphism $Q(f) \in \mathrm{Diff}(Q(X))$ of the quotient manifold $Q(X) = D(X)/W_0$. 

\end{definition}

\subsection{Main result}
Our main result is the following:

\begin{theorem}\label{theorem:main1}
Let $X$ be a smooth compact $d$-dimensional manifold with non-empty boundary, with a choice of flag triangulation $\mathcal{T}$ of $\partial X$ and torsion-free finite-index subgroup $W_0$ of the Coxeter group $W$ associated to $\mathcal{T}$. Suppose both of the following conditions on $X$ hold:
\begin{enumerate}
\item There exists a smooth embedding of $X$ in the $d$-dimensional ball $B^d$.
\item Either $H_k (X, \mathbb{Z} ) \neq 0$ in some degree $k >0$, or $X$ is contractible and $\pi_1 (\partial X) \neq \{1\}$.
\end{enumerate}
Suppose $f \in \mathrm{Diff}(X, \partial X )$ is a diffeomorphism of $X$ which is not smoothly isotopic, relative to the boundary, to a diffeomorphism supported inside a smoothly embedded $d$-dimensional ball $B^d \subset X$.

Then $Q(f)$ is not smoothly isotopic to a diffeomorphism of $Q(X)$ of the form $Q(\ell)$, where $\ell \in \mathrm{Diff}(X, \partial X)$ is a diffeomorphism supported inside a smoothly embedded $d$-dimensional ball $B^d \subset X$. (In particular, $Q(f)$ is not smoothly isotopic to the identity diffeomorphism $\mathrm{Id}_{Q(X)} = Q(\mathrm{Id}_X)$.)
\end{theorem}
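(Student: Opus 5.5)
We argue by contradiction. Suppose $Q(f)$ is smoothly isotopic to $Q(\ell)$ with $\ell$ supported in a ball $B \subset X$. Then $Q(f)\circ Q(\ell)^{-1} = Q(f\circ \ell^{-1})$ is isotopic to the identity. Since $f\circ\ell^{-1}$ is again not isotopic rel boundary to a local diffeomorphism, we may replace $f$ by $f\circ\ell^{-1}$ and assume $Q(f)$ is smoothly isotopic to $\mathrm{Id}_{Q(X)}$.

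\emph{Step 1: lifting to $D(X)$.} Lift the isotopy $Q(f)\simeq \mathrm{Id}$ along the covering $D(X)\to Q(X)$, starting at the identity. The endpoint of the lift is a lift of $Q(f)$, namely $w\circ D(f)$ for some deck transformation $w\in W_0$. Since the isotopy moves points a bounded distance (compactness of $Q(X)$), and $D(f)$ preserves each chamber, $w$ moves points boundedly. Here I expect to need that $W$ acts with trivial ``bounded-displacement centralizer''; this holds when $W$ is infinite and centerless, for instance when $\partial X$ is not a sphere. Hypothesis (2) should guarantee this, possibly after passing to a finite cover. Granting this, $w=1$, so $D(f)$ is isotopic to the identity through diffeomorphisms at bounded distance from the identity.

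\emph{Step 2: localizing to a compact piece.} From $D(f)\simeq\mathrm{Id}$ with bounded displacement, we want a compactly supported statement about $D(f)_0$. The isotopy $H_t$ moves $X_0$ only within a compact region $P_n$. Write $D(f)=D(f)_0\circ R$, where $R$ is supported on the other chambers. Restrict attention to $P_n$ with $n$ large, enlarging $P_n$ to contain the bounded neighborhood of $X_0$. Isotopy extension and a swindle-type argument (conjugating by the $W$-action to push the support of $R$ away) should then show that $D(f)_0$, viewed in $P_N$ for some large $N$, is isotopic rel boundary to a diffeomorphism that, restricted to $X_0$ plus a collar, agrees with $f$ up to maps supported away from $\interior{X_0}$. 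This is the step I find least clear, and I would instead try a cleaner route. Cap off with the other chambers to obtain that $f\natural \mathrm{id}\natural\cdots\natural\mathrm{id}$ is isotopic rel boundary in $X_0\natural\cdots\natural X_N\cong P_N$ (Proposition \ref{prop:boundarysum}) to the identity, or at least to something supported in the complement of $\interior{X_0}$.

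\emph{Step 3: decoupling the boundary sum.} Use hypothesis (1). Each $X_i\subset B^d$, so $X_1\natural\cdots\natural X_N$ embeds in a ball $B$. This gives an embedding $P_N\hookrightarrow X_0\natural B$, and $X_0\natural B\cong X_0$. The triviality of $f\natural\mathrm{id}$ on $P_N$ therefore pushes forward to show that $f$ is isotopic rel boundary in $X_0$ to a map supported in $X_0\setminus\interior{X_0}'$, where the shrunken copy of $X_0$ sits inside. After reorganizing, this should give that $f$ is isotopic to $f'\circ\lambda$, with $\lambda$ supported in the ball $B$ attached along a boundary collar. Such a map is local, which is the contradiction.

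The main obstacle is the interplay between Steps 2 and 3. The difficulty is to show that the ``error'' supported on the other chambers, after embedding them into a ball, becomes local rather than merely a boundary-collar twist. Hypothesis (2) must be used here, perhaps via a homological or $\pi_1$ argument that the isotopy cannot move $X_0$ into a different chamber (e.g.\ via $H_k(X)\neq0$ detecting the image of $X_0$). I would attempt this first via homology classes, and fall back to $\pi_1(\partial X)$ in the contractible case.
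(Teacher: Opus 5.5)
Your outline has the right architecture: lift the isotopy, show the deck transformation is trivial, localize to a compact piece, then use $P_N\cong X_0\natural\cdots\natural X_N$ and embed the other chambers into balls. However, the two steps that carry the content are left as expectations, and you misplace where hypothesis (2) is used.

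\textbf{Step 1.} The condition ``infinite and centerless'' does not force $w=1$. In the torus example, $W=D_\infty^n$ is infinite and centerless, yet every element of $W_0\cong\mathbb{Z}^n$ displaces points of $D(X)=\mathbb{R}^n$ by a bounded amount. A bounded-displacement argument needs $W$ to have trivial FC-center. Bounded displacement plus properness gives $w$ only finitely many conjugates, so $w$ is central in the finite-index subgroup $C_{W_0}(w)$. Triviality of the FC-center does follow from Qi's theorem once $W$ is irreducible, infinite and non-affine. To get irreducibility you must show $\partial X$ is not a sphere. In the homological case of (2), this requires hypothesis (1) plus Alexander duality: a compact $X\subset S^d$ whose boundary is a homology sphere is acyclic. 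So your route would use (1) at a point where the paper does not. None of this is in your proposal, and ``possibly after passing to a finite cover'' has no meaning for the fixed $Q(X)$.

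The paper avoids group theory here. It proves directly (Lemma~\ref{lem: j and j' nonisotopic}) that for $g\neq 1$ the embeddings $j$ and $g\circ j\circ f$ of $\mathring X$ into $D(X)$ are not isotopic. Such an isotopy would be confined, rel $\partial P_N$, to some $P_N\cong X_0\natural\cdots\natural X_N$. But the images of $H_k(\mathring X_0)$ and $H_k(\mathring X_a)$ in $H_k(P_N)\cong\bigoplus_i H_k(X_i)$ differ. In the contractible case, the two embeddings give different kernels of $\pi_1(\partial P_N)\to\pi_1(P_N\setminus\mathring X)$. This is the only place hypothesis (2) enters.

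\textbf{Steps 2 and 3.} The missing idea is the isotopy extension theorem applied to the compact submanifold $X_0$, which is how Proposition~\ref{prop2} works. Suppose $\widetilde F_t$ runs from $\mathrm{Id}$ to $D(f)$. Choose a compactly supported ambient isotopy $H_t$ of $D(X)$ with $H_0=\mathrm{Id}$ and $H_t|_{X_0}=\widetilde F_t|_{X_0}$. Since $w=1$, $H_1$ maps $X_0$ onto itself as $f$. Then $G_t:=H_t^{-1}\circ D(f)_0$ is a compactly supported isotopy from $D(f)_0$ to a diffeomorphism $G_1$ that is the identity on all of $X_0$, collar included. Hence $G_1$ is supported on the other chambers, inside the interior of some $P_n$. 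No swindle and no splitting $D(f)=D(f)_0\circ R$ is needed.

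Now transport $G_t$ along
\[
P_n\cong X_0\natural\cdots\natural X_n\hookrightarrow X_0\natural B^d\natural\cdots\natural B^d\cong X
\]
and extend by the identity. This gives an isotopy rel $\partial X$ from $f$ to a diffeomorphism supported in finitely many disjoint interior balls, hence in a single ball. So the ``boundary-collar twist'' you worry about cannot occur, and no further appeal to (2) is needed at this stage. As written, your Steps 2 and 3 (including the unexplained $f'\circ\lambda$) state the desired conclusion rather than prove it.
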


The proof of Theorem \ref{theorem:main1} is given in \S \ref{section:proofmain}. We now discuss some consequences of this result.\\

The statement of Theorem \ref{theorem:main1} can be equivalently phrased as follows. Fix an orientation of $X$ (by assumption (1)). An orientation-preserving embedding of a ball $ B^d \subset X$ induces a homomorphism
\[
e_\ast : \pi_0 \mathrm{Diff}(B^d , \partial B^d) \to \pi_0 \mathrm{Diff}(X , \partial X)
\]
which is easily seen to be independent of the choice of $e$. The subgroup of $\pi_0 \mathrm{Diff}(X, \partial X)$ of \textit{local diffeomorphisms} is, by definition, the central (and hence normal) subgroup given by the image of $e_\ast$, and we denote it 
\[
( \pi_0 \mathrm{Diff}(X, \partial X))^{local} \leq \pi_0 \mathrm{Diff}(X, \partial X).
\]
Now, the $Q$ construction gives a natural homomorphism of mapping class groups
\[
Q : \pi_0 \mathrm{Diff}(X, \partial X) \to \pi_0 \mathrm{Diff}(Q(X)).
\]
Note that the image of the subgroup $( \pi_0 \mathrm{Diff}(X , \partial X))^{local}$ under $Q$ is a central subgroup of $\pi_0 \mathrm{Diff}(Q(X))$. Theorem \ref{theorem:main1} asserts that, under assumption (1) and (2) on $X$, the induced group homomorphism 
\begin{align}
Q : \frac{\pi_0 \mathrm{Diff}(X, \partial X)}{( \pi_0 \mathrm{Diff}(X , \partial X))^{local}} \to \frac{\pi_0 \mathrm{Diff}(Q(X))}{Q\big( ( \pi_0 \mathrm{Diff}(X , \partial X))^{local}\big)       } \label{Qmapquotient}
\end{align}
is \textit{injective}.\\ 

The following result gives additional information about the subgroup $Q\big( ( \pi_0 \mathrm{Diff}(X , \partial X))^{local}\big)$ appearing in the target of the map (\ref{Qmapquotient}).
\begin{proposition}\label{prop:imageQlocal}
Suppose that $X$ is orientable and $W_0 \leq W$ is orientation-preserving. Let $r \in O(d)$ denote any linear reflection of $\mathbb{R}^d$. Then the index of $W_0$ in $W$ is even and
\[
Q\big( ( \pi_0 \mathrm{Diff}(X , \partial X))^{local}\big) = \Big\{ D_0 \Big( e_\ast \big( (\ell \cdot r\ell r^{-1})^{|W:W_0|/2} \big)\Big) \, | \, \ell \in  \pi_0 \mathrm{Diff}(B^d , \partial B^d) \Big\}.
\]
\end{proposition}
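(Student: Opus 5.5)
The plan is to compute $Q(e_\ast \ell)$ directly as a product of diffeomorphisms supported in disjoint balls, one ball in each chamber of $Q(X)$, and then merge these balls into a single ball inside the image of $X_0$. I read $D_0(\cdot)$ in the statement as the diffeomorphism of $Q(X)$ induced by $D(\cdot)_0$, i.e.\ acting by the given map on the image of $X_0$ and by the identity elsewhere. This uses that $W_0$ acts freely and $X_0$ is a fundamental domain, so the interior of $X_0$ embeds in $Q(X)$.

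\emph{Parity of the index.} The homomorphism $\chi : W \to \mathbb{Z}/2$, $\chi(g) = L(g) \bmod 2$, is well defined because the relations of $W$ have even length. It is surjective because each generator $v \in V$ has $\chi(v) = 1$. Since $W_0$ is orientation-preserving, $W_0 \leq \ker\chi$, so
\[
|W:W_0| = 2\,|\ker\chi : W_0|,
\]
which is even. The same reasoning shows that among the $|W:W_0|$ cosets $gW_0$, exactly half consist of elements with $L$ even and half with $L$ odd. Hence $Q(X)$ is the union of $m := |W:W_0|$ chambers, images of $gX$ for coset representatives $g$: $m/2$ of them are identified with $X$ as oriented manifolds and $m/2$ with $-X$.

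\emph{Computing $Q(e_\ast\ell)$.} Fix an orientation-preserving embedding $e : B^d \to \interior X$ and $\ell \in \pi_0\mathrm{Diff}(B^d,\partial B^d)$. Then $Q(e_\ast\ell)$ is the product, over the $m$ chambers, of the diffeomorphisms $g\circ e\circ \ell\circ e^{-1}\circ g^{-1}$ supported in the disjoint balls $g e(B^d)$.
\begin{itemize}
\item If $L(g)$ is even, $g\circ e$ is an orientation-preserving embedding into $Q(X)$, and the contribution is $\ell$ transported along it.
\item If $L(g)$ is odd, $g\circ e$ reverses orientation, whereas $g\circ e\circ r$ preserves it. Writing $g e \ell e^{-1} g^{-1} = (g e r)(r\ell r^{-1})(g e r)^{-1}$, the contribution is $r\ell r^{-1}$ transported along an orientation-preserving embedding.
\end{itemize}

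\emph{Merging the balls.} Here I use two standard facts. First, any two orientation-preserving embeddings of $B^d$ into the connected oriented manifold $Q(X)$ are isotopic (disc theorem), and this extends to finitely many disjoint balls. Second, a product of ball-supported diffeomorphisms in disjoint balls is isotopic to a single ball-supported diffeomorphism, given by the corresponding product in $\pi_0\mathrm{Diff}(B^d,\partial B^d)$. This group is abelian by the usual Eckmann--Hilton stacking argument, so the order of the factors does not matter. Isotoping all $m$ balls into disjoint sub-balls of one ball in the image of $X_0$ then gives
\[
Q(e_\ast \ell) \simeq D_0\big(e_\ast(\ell^{m/2}(r\ell r^{-1})^{m/2})\big) = D_0\big(e_\ast((\ell\, r\ell r^{-1})^{m/2})\big).
\]
Every element of $(\pi_0\mathrm{Diff}(X,\partial X))^{local}$ has the form $e_\ast\ell$, so as $\ell$ ranges over $\pi_0\mathrm{Diff}(B^d,\partial B^d)$ this yields exactly the claimed set equality.

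\emph{Main obstacle.} The delicate point is the orientation bookkeeping in the merging step. One must check that the isotopy carrying the ball $g e(B^d)$ for odd $L(g)$ into $X_0$ carries the oriented embedding $g\circ e\circ r$, and not $g\circ e$, to an orientation-preserving embedding into $X_0$. Only then does that ball contribute $r\ell r^{-1}$ rather than $\ell$ or $\ell^{-1}$. I would make this precise by comparing both sides on an oriented chart, using that the disc theorem only identifies orientation-preserving embeddings.
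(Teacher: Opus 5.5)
Your proof is correct and follows the paper's argument. Both decompose $Q(e_\ast\ell)$ into $|W:W_0|$ commuting ball-supported pieces, one per chamber, use the parity $\chi$ of word length to see that odd chambers contribute $r\ell r^{-1}$, isotope all balls into $X_0$, and count cosets via $\chi$. The differences are cosmetic. You move the balls by the disc theorem applied to the orientation-preserving reparametrisations $g\circ e\circ r$, where the paper uses explicit face-crossing isotopies, and this makes your "main obstacle" automatic. You count cosets via $|W:W_0|=2\,|\ker\chi:W_0|$ rather than by multiplying by a generator. You also state explicitly the commutativity of $\pi_0\mathrm{Diff}(B^d,\partial B^d)$, which the paper's unordered product uses implicitly.
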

\begin{proof}
Let $\ell \in \mathrm{Diff}(B^d, \partial B^d)$, and fix an embedding of $B^d \subset X$ in the interior. Then the diffeomorphism $Q(e_\ast (\ell)) \in \mathrm{Diff}(Q(X))$ is supported in the disjoint union of $|W:W_0|$ balls
\[
\bigsqcup_{g \in W/W_0} g\cdot B^d \subset \bigcup_{g \in W/W_0}g\cdot X = Q(X) .
\]

Fix a word-length compatible ordering of $W$, and consider the corresponding ordered chambers $X_0 , X_1 , \ldots \subset D(X)$. Let $B_i \subset X_i$ denote the ball $g_i \cdot B^d \subset g_i \cdot X = X_i$. For each $i$, we choose an isotopy of the ball $B_i$ from $X_i \subset D(X)$ to the ball $B_0$ in $X_0 \subset D(X)$ obtained by a concatenation of \textit{simple} isotopies: a simple isotopy is one which takes the ball from a chamber $X_j$ $(1 \leq j \leq i$) to $P_{j-1} = \bigcup_{k =0}^{j-1}X_k$ by crossing $\partial X_j$ along a face of $\mathcal{T}^\ast$ contained in $\partial P_{j-1}$.

Now, if $B^d \subset X$ is regarded as parametrised by an orientation-preserving embedding, then the balls $B_i \subset X_i$ inherit a parametrisation. However, with this parametrisation the orientation of $B_i$ coincides with that of $X_i$ if and only if the word-length $L(g_i)$ of $g_i$ is even. This means that after the isotopy of $B_i$ from $X_i$ to $B_0 \subset X_0$ the resulting parametrised ball in $X_0 $ has the orientation of $X_0$ if and only if $L(g_i)$ is even. Since $W_0 \leq W$ is orientation-preserving then $W_0$ only contains elements with even word-length, so the function $\chi (g) = L(g) \, \textrm{mod} \, 2 \in \mathbb{Z}/2$ is well-defined on the set of cosets: $\chi : W/W_0 \to \mathbb{Z}/2$. 

Regarding $Q( e_\ast (\ell ) )$ as the product of $|W:W_0|$ commuting diffeomorphisms each supported in a chamber $X_i$, the choices of isotopies of balls lead to isotopies in $Q(X)$ of each of the $|W:W_0|$ diffeomorphism in that factorisation. If the isotoped ball $B_i  \subset X_0$ has the same orientation as $X_0$ then the resulting diffeomorphism of $Q(X)$ is isotopic to $D_0 ( e_\ast \ell )$, whereas if the orientation is reversed then it is isotopic to $D_0 ( e_\ast (r\ell r^{-1}) )$. Overall, we've shown that $Q(e_\ast (\ell) )$ is isotopic to the diffeomorphism of $Q(X)$ supported in the interior of $X_0$ and given by
\[
D_0 \Big( e_\ast \big( \prod_{g \in W/W_0} r^{\chi(g)}\ell r^{-\chi(g)} \big)\Big).
\]

Finally, note that the subsets $\chi^{-1} (0)$ and $\chi^{-1}(1)$ of $W/W_0$ are in bijection: if $v \in W$ denotes any Coxeter generator, then left multiplication by $v$ on $\chi^{-1}(0) $ yields a bijection $\chi^{-1}(0)\to \chi^{-1}(1)$. Hence the sets $\chi^{-1}(0)$, $\chi^{-1}(1)$ have cardinality $|W:W_0|/2$ and the claimed result follows from our previous discussion.
\end{proof}

We obtain the following injectivity result, which implies Theorem \ref{thm:main_injective_intro}:
\begin{corollary}\label{cor:injective}
Let $X$ be a smooth compact $d$-dimensional manifold with non-empty boundary, with a choice of flag triangulation $\mathcal{T}$ of $\partial X$ and torsion-free finite-index orientation-preserving subgroup $W_0$ of the Coxeter group $W$ associated to $\mathcal{T}$. Suppose both of the following conditions on $X$ hold:
\begin{enumerate}
\item There exists a smooth embedding of $X$ in the $d$-dimensional ball $B^d$.
\item Either $H_k (X, \mathbb{Z} ) \neq 0$ in some degree $k >0$, or $X$ is contractible and $\pi_1 (\partial X) \neq \{1\}$.
\end{enumerate}
Suppose that either of the following holds:
\begin{itemize}
\item $d\neq4$, or
\item $d=4$, and $W_0\leq W$ is not orientation-preserving (and hence $Q(X)$ is non-orientable).
\end{itemize}
Then the $Q$ construction defines an injective group homomorphism
\[
 \frac{\pi_0 \mathrm{Diff}(X, \partial X)}{( \pi_0 \mathrm{Diff}(X , \partial X))^{local}} \hookrightarrow \pi_0 \mathrm{Diff}(Q(X)) \quad , \quad [f] \mapsto Q(f).
\]
\end{corollary}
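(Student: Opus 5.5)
The plan is to obtain injectivity directly from Theorem \ref{theorem:main1}, so that the real work is showing the map is \emph{well defined}: the local subgroup must map to the trivial class in $\pi_0 \mathrm{Diff}(Q(X))$. Granting that, take $f$ with $Q(f)$ smoothly isotopic to $\mathrm{Id}_{Q(X)} = Q(\mathrm{Id}_X)$. Since $\mathrm{Id}_X$ is trivially supported in a ball, Theorem \ref{theorem:main1} forces $f$ to be isotopic rel boundary to a local diffeomorphism. Hence the kernel of $[f]\mapsto Q(f)$ is exactly $(\pi_0 \mathrm{Diff}(X,\partial X))^{local}$. Here I use that $Q$ is a homomorphism, because $D(f\circ g)=D(f)\circ D(g)$ chamberwise.

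For well-definedness, fix $\ell \in \mathrm{Diff}(B^d,\partial B^d)$. As in the proof of Proposition \ref{prop:imageQlocal}, $Q(e_*\ell)$ is a product of $|W:W_0|$ commuting copies of $\ell$, one in each chamber $gX$ for $g \in W/W_0$. Each copy can be pushed through faces into $X_0$, where it becomes $D_0(e_*\ell)$ or $D_0(e_*(r\ell r^{-1}))$. Note that $\pi_0\mathrm{Diff}(B^d,\partial B^d)$ is abelian by an Eckmann--Hilton argument, which I will use when rearranging these factors.

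Pair the cosets using a Coxeter generator $v$. If $vgW_0=gW_0$, then $g^{-1}vg\in W_0$ would be an involution in a torsion-free group, which is impossible. So left multiplication by $v$ is a fixed-point-free involution of $W/W_0$, and $|W:W_0|$ is always even. Pushing the ball in $vgX$ across the common face into $gX$ produces the factor $r\ell r^{-1}$. So, up to isotopy, $Q(e_*\ell)$ is a product of $|W:W_0|/2$ copies of $D_0\big(e_*(\ell\cdot r\ell r^{-1})\big)$, in agreement with Proposition \ref{prop:imageQlocal} in the orientable case.

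\emph{Case $d\neq 4$.} For $d\le 3$ the group $\pi_0\mathrm{Diff}(B^d,\partial B^d)$ is trivial (Smale, Cerf, Hatcher), so there is nothing to prove. For $d\ge 5$ it is $\Theta_{d+1}$. Conjugation by a reflection corresponds to reversing orientation of the associated exotic sphere, which is inversion in $\Theta_{d+1}$. Hence $\ell\cdot r\ell r^{-1}$ is trivial and $Q(e_*\ell)\simeq \mathrm{Id}$.

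\emph{Case $d=4$ with $Q(X)$ non-orientable.} Here I cannot assume $r\ell r^{-1}=\ell^{-1}$. Instead I would use the non-orientability of $Q(X)$ directly. Pushing a ball once around an orientation-reversing loop gives an isotopy $D_0(e_*x)\simeq D_0(e_*(rxr^{-1}))$ for every local $x$. Because the routing of each ball to $X_0$ can now be chosen freely, each of the $|W:W_0|$ factors can be made to arrive with whichever orientation we like. The aim is then to reorganise the factors so that they cancel.

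This last step is the main obstacle. The freedom of routing only lets us replace $\ell$ by $r\ell r^{-1}$, not by $\ell^{-1}$. So I would first try to show that in a non-orientable ambient manifold a local diffeomorphism is isotopic to its inverse. One route is a symmetric model: place a ball $B$ across the face between $gX$ and $vgX$ so that $v$ restricts to a reflection of $B$. Then analyse $\ell \sqcup v\ell v$ on $B$ together with its image after pushing $B$ around an orientation-reversing loop. If this cancellation cannot be achieved, the fallback is to read the corollary as injectivity modulo $Q\big((\pi_0\mathrm{Diff}(X,\partial X))^{local}\big)$, which is exactly what Theorem \ref{theorem:main1} gives.
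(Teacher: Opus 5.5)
Your reduction to the triviality of $Q\big((\pi_0\mathrm{Diff}(X,\partial X))^{local}\big)$ via Theorem \ref{theorem:main1} is the paper's, and so is your treatment of $d\neq 4$. You pair up chambers, push each pair into one chamber to obtain $\ell\cdot r\ell r^{-1}$, and use that conjugation by a reflection is inversion on $\pi_0\mathrm{Diff}(B^d,\partial B^d)$. That group is trivial for $d\le 3$, and for $d\ge 5$ it is $\Theta_{d+1}$, where reflection acts by orientation reversal. Each pair is already trivial inside a single ball, so your version handles $d\neq 4$ uniformly, whether or not $Q(X)$ is orientable; the paper treats the non-orientable case separately.

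One small correction: $gX$ shares the face $gX_v$ with $gvX$, not with $vgX$. Because $W_0$ acts on the left, the chambers of $Q(X)$ are indexed by $W_0\backslash W$. The pairing should therefore be $W_0g\leftrightarrow W_0gv$, which is fixed-point-free since $gvg^{-1}$ is an involution and hence not in $W_0$.

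The genuine gap is the case $d=4$ with $Q(X)$ non-orientable, which you do not prove. Non-orientability only adds the relation $D_0(e_*\ell)\simeq D_0(e_*(r\ell r^{-1}))$ in $\pi_0\mathrm{Diff}(Q(X))$. So the ball-pushing bookkeeping yields at best $Q(e_*\ell)\simeq D_0\big(e_*(\ell^{|W:W_0|})\big)$, and no factor $\ell^{-1}$ ever appears. To kill this you need an extra relation such as $D_0\big(e_*(\ell\cdot r\ell r^{-1})\big)\simeq\mathrm{Id}$. Its only evident source is the identity $r\ell r^{-1}=\ell^{-1}$ in $\pi_0\mathrm{Diff}(B^4,\partial B^4)$, which the Remark after the corollary says is unknown. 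Your symmetric model $\ell\sqcup v\ell v$ on a ball bisected by a face is just $\ell\cdot r\ell r^{-1}$ again, so it cannot produce the cancellation on its own.

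The paper settles this case in one sentence, asserting that a ``straightforward variant'' of the proof of Proposition \ref{prop:imageQlocal} gives triviality. It supplies no mechanism that gets around the obstruction you found. Its following Remark, as literally written, even says the argument does not apply when $d=4$ and $W_0$ is not orientation-preserving. So you are not missing an idea that the paper contains.

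Still, your proposal proves the corollary only for $d\neq 4$. That is everything if one reads the standing hypothesis that $W_0$ is orientation-preserving literally, since that makes the second bullet vacuous. In the remaining case, what you actually obtain is your fallback: injectivity modulo $Q\big((\pi_0\mathrm{Diff}(X,\partial X))^{local}\big)$, which is the content of Theorem \ref{theorem:main1}.
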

\begin{proof}
By Theorem \ref{theorem:main1} and its re-phrasing as the injectivity of (\ref{Qmapquotient}), it suffices to show that $Q\big( ( \pi_0 \mathrm{Diff}(X , \partial X))^{local}\big)$ is the trivial subgroup of $\pi_0 \mathrm{Diff}(Q(X))$. A straightforward variant of the argument in the proof of Proposition \ref{prop:imageQlocal} to the case when $W_0 \leq W$ is not orientation-preserving, and hence $Q(X)$ is non-orientable, shows that $Q\big( ( \pi_0 \mathrm{Diff}(X , \partial X))^{local}\big)$ is trivial. Thus, assume $W_0$ is orientation-preserving from now on. From Proposition \ref{prop:imageQlocal}, the triviality of $Q\big( ( \pi_0 \mathrm{Diff}(X , \partial X))^{local}\big)$ is implied by the statement that the conjugation automorphism
\[
c : \pi_0 \mathrm{Diff}(B^d , \partial B^d ) \to \pi_0 \mathrm{Diff}(B^d , \partial B^d ) \quad , \quad \ell \mapsto r \ell r^{-1}
\]
equals the inversion map: $c(\ell) = \ell^{-1}$. Below we explain why conjugation equals inversion in all dimensions $d \neq 4$.\\

If $d< 4$ then $\pi_0 \mathrm{Diff}(B^d , \partial B^d)$ is trivial \cite{smale,hatchersmale}.\\

If $d>4$ then there is a canonical group isomorphism (\cite[Corollary 2]{cerf-pisotopy})
\[
\pi_0 \mathrm{Diff}(B^d , \partial B^d) \cong \Theta_{d+1}
\]
where $\Theta_{d+1}$ is the group of diffeomorphism-classes of smooth oriented homotopy $(d+1)$-spheres (with group operation given by connected sum). This isomorphism sends $\ell \in \pi_0 \mathrm{Diff}(B^d , \partial B^d)$ to the homotopy sphere $S_\ell$ obtained by gluing the oriented standard ball $B^{d+1}$ with $- B^{d+1}$ along the orientation-preserving diffeomorphism of $S^{d} = \partial B^{d+1}$ given by extending $\ell$ as the identity outside of a ball $B^d$ in $S^d$: denoting the extension also by $\ell$,
\[
S_\ell = B^{d+1} \bigcup_{\ell} - B^{d+1}.
\]
Now, let $r$ also denote the image of $r \in O(d)$ under the inclusion $O(d) \leq  O(d+1)$. Then we obtain an orientation-\textit{reversing} diffeomorphism from $S_\ell $ to $S_{r\ell r^{-1} }$ by sending $x \in \pm B^{d+1} \subset S_\ell$ to $r(x) \in \pm B^{d+1} \subset S_{r\ell r^{-1}}$. The orientation reversal of a homotopy $(d+1)$-sphere represents its \textit{inverse} in the group $\Theta_{d+1}$, and hence conjugation in $\pi_0 \mathrm{Diff}(B^d , \partial B^d)$ equals the inversion map, as claimed.\end{proof}

\begin{remark}On the other hand, it is not known whether conjugation by a reflection equals inversion in $\pi_0 \mathrm{Diff}(B^4 , \partial B^4)$. Thus, the argument showing $Q\big( ( \pi_0 \mathrm{Diff}(X , \partial X))^{local}\big) = 0$ in the proof of Corollary \ref{cor:injective} doesn't apply when $d=4$ and $W_0$ is not orientation-preserving. 

\end{remark}

\subsection{Exotic diffeomorphisms of closed aspherical $4$-manifolds}\label{subsection:examples}
We now describe examples of $X$ and $f$ satisfying the hypothesis of Theorem \ref{theorem:main1} where, in addition: $X$ is aspherical (and hence $Q(X)$ is aspherical by Proposition \ref{prop:Davis-aspherical}) and $\dim X = d = 4$. Our examples are inspired by those in \cite{DHHRS}.\\

Let $Y$ be a closed oriented $3$-manifold in the following family of Brieskorn homology spheres, previously considered in \cite[Theorem D]{KLMME1},
\[
\begin{cases}
\Sigma(p,ps+1,ps+2) \quad (s\geq 1, \,\, \text{and odd} \, p \geq 3),\\
\Sigma(p,ps-1,ps-2) \quad (s \geq 2, \, \, \text{and odd} \, p \geq 5).
\end{cases}
\]
These homology spheres belong in the larger Casson--Harer family, a well-known family of Brieskorn spheres known to bound contractible Mazur $4$-manifolds \cite{CassonHarer81}. Let $M$ be a contractible Mazur $4$-manifold with $\partial M = Y$. Let $\mathring{\Sigma}_g $ denote a punctured surface of genus $g \geq 1$. We attach the `genus $g$ $2$-handle' $\mathring{\Sigma}_g \times D^2$ to $M$ along a ($0$-framed) unknot $U$ in the boundary of $M$, and denote the resulting smooth compact oriented $4$-manifold-with-boundary by

\begin{align}
X := M \bigcup_{\nu (U) \cong (\partial \mathring{\Sigma}_g ) \times D^2} \mathring{\Sigma}_g \times D^2  . \label{X:example}
\end{align}

For the diffeomorphism, we let $\tau_Y \in \pi_0 \mathrm{Diff}(X, \partial X)$ denote the (isotopy class of the) Dehn twist diffeomorphism along the smoothly embedded Seifert-fibered $3$-manifold $Y =\partial M \subset X$ (see \cite[\S 2.1.1]{KLMME1} for the definition of Seifert-fibered Dehn twist). Then we take 
\begin{align}
f := (\tau_Y)^d \in \pi_0 \mathrm{Diff}(X, \partial X) \quad \text{with} \,\, d \neq 0.\label{f:example}
\end{align}

\begin{proposition}\label{Prop:3.6}
With $X$ and $f$ as in (\ref{X:example}) and (\ref{f:example}) we have:
\begin{enumerate}
\item $X$ is homotopy equivalent to the closed genus $g (\geq 1)$ surface $\Sigma_g$. (In particular, $X$ is aspherical and $H_k (X, \mathbb{Z})\neq 0$ for $k =1,2$.)
\item $X$ embeds smoothly in $B^4$.
\item $f$ is both topologically isotopic and smoothly pseudo-isotopic to the identity map on $X$, relative to the boundary. However, $f$ is not smoothly isotopic, relative to the boundary, to a diffeomorphism of $X$ supported inside a smoothly embedded $4$-ball $B^4 \subset X$.
\end{enumerate}
\end{proposition}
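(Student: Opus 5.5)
For (1), I would read off the homotopy type from the handle decomposition (\ref{X:example}). The Mazur manifold $M$ is contractible. The unknot $U \subset \partial M$ bounds an embedded disk in $\partial M$, so it is null-homotopic in $M$.

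Attaching $\mathring{\Sigma}_g \times D^2$ along $\partial\mathring{\Sigma}_g \times D^2 \cong \nu(U)$ is, up to homotopy, gluing $\mathring{\Sigma}_g$ to a contractible space along a circle. Collapsing $M$ to a point then gives $X \simeq \mathring{\Sigma}_g \cup_{\partial} D^2 = \Sigma_g$.

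More carefully, I would use $X \simeq M \cup_U \mathring{\Sigma}_g$. Since $U$ is null-homotopic in the contractible space $M$, we can homotope the attaching map into a point, and homotopy invariance of adjunction spaces finishes it. Asphericity and $H_1 = \Z^{2g}$, $H_2 = \Z$ follow because $g \geq 1$.

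For (2), $M$ embeds in $B^4$ by the Mazur property, as noted in the introduction. Concretely, $M \times I \cong B^5$, or equivalently the double of $M$ is $S^4$; either way $M \subset S^4 \setminus \mathrm{pt} \subset B^4$.

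Since $U$ is an unknot in a small ball of $\partial M$, I would arrange the embedding so that a neighbourhood of that ball lies in $\partial B^4$. Then attaching the genus $g$ $2$-handle happens inside a collar. Concretely, $\mathring{\Sigma}_g$ embeds in $S^3$ as an unknotted Seifert surface with unknotted boundary. Pushing it into a collar $S^3 \times [0,1]$ of $\partial(B^4 \setminus \mathrm{int}\,\text{collar})$, with its thickening $\mathring{\Sigma}_g \times D^2$ attached along $U$ with the $0$-framing (the Seifert framing), gives $X \subset B^4$.

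The point to check is that $U$, viewed in $\partial M \subset S^4$ (the boundary of the complement), is a trivial knot in a ball that we can tube out to the outer boundary. Since $U$ lies in a ball $D^3 \subset \partial M$, a thin arc from that ball to $\partial B^4$ lets us drag a neighbourhood to the boundary. Alternatively, we can simply observe that $X = M \natural (\mathring{\Sigma}_g \times D^2)$ up to diffeomorphism, because attaching along an unknot in a ball is a boundary sum. Then $\mathring{\Sigma}_g \times D^2 \cong \natural^{2g} S^1\times D^3$ embeds in $B^4$, and boundary sums of ball-embeddable manifolds embed in $B^4$. I would use this boundary-sum description; it is the cleanest route.

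For (3), I would invoke the results quoted in the introduction. By \cite[Theorem D]{KLMME1} and Orson--Powell \cite{OP}, $\tau_{\partial M}$ on $M$ is topologically isotopic and smoothly pseudo-isotopic to the identity rel boundary. Moreover, no nonzero power of $\tau_{\partial M}$ is isotopic rel boundary to a diffeomorphism supported in a ball. Extending by the identity over the $2$-handle transfers the first two properties to $f$ on $X$.

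The main obstacle is transferring the non-locality from $M$ to $X$. An isotopy in $X$ need not stay inside $M$. Here I would rely on the fact that the family Seiberg--Witten obstruction in \cite{KLMME1} is stable under this kind of enlargement. Alternatively, I would cite the stronger form of \cite[Theorem D]{KLMME1}, which I expect to allow boundary sums with manifolds having $b^+=0$, such as $\mathring{\Sigma}_g\times D^2$. This is precisely the role of the Casson--Harer family condition. Failing that, I would redo the family Seiberg--Witten argument after capping $X$ by a negative definite or $b^+=0$ piece; the added genus handle contributes no $b^+$.
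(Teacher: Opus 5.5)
Parts (1) and (2), and the topological isotopy and pseudo-isotopy half of (3), are fine. The genuine gap is at exactly the step you flag as the main obstacle: you never actually transfer non-locality from $M$ to $X$. Appealing to ``stability of the family Seiberg--Witten obstruction under enlargement'', to a stronger form of \cite[Theorem D]{KLMME1} that you expect to exist, or to redoing the gauge theory after capping, is a hope, not an argument. (The Casson--Harer condition is only there so that these Brieskorn spheres bound Mazur manifolds.)

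The missing idea is elementary and needs no further Seiberg--Witten input. Since $U$ is a $0$-framed unknot in a $3$-ball of $\partial M$, you have $X\cong M\natural(\Sigma_g\times D^2)$. Because $\Sigma_g\times D^2$ embeds in $B^4$, this gives an embedding $e\colon X\hookrightarrow M\natural B^4\cong M$. The embedding can be chosen to be the identity on a shrunken copy of $M$ containing $\mathrm{supp}(\tau_M^d)$. Now suppose $f$ were isotopic rel $\partial X$ to a diffeomorphism supported in a ball $B\subset X$. Conjugate that isotopy by $e$ and extend it by the identity over $M\setminus e(X)$. The result is an isotopy rel $\partial M$ from $\tau_M^d$ to a diffeomorphism supported in the ball $e(B)$. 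This contradicts \cite[Theorem D]{KLMME1} directly. So the fact that an isotopy in $X$ may leave $M$ is harmless, because you can put all of $X$ back inside $M$. This is the same cut-and-re-embed trick used in the proof of Theorem~\ref{theorem:main1}.

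Separately, your ``cleanest route'' in (2) misidentifies the boundary summand. Attaching $\mathring{\Sigma}_g\times D^2$ to $B^4=D^2\times D^2$ along a $0$-framed unknot caps off the boundary of the surface, giving $\Sigma_g\times D^2$. Hence $X\cong M\natural(\Sigma_g\times D^2)$, not $M\natural(\mathring{\Sigma}_g\times D^2)\cong M\natural(\natural^{2g}S^1\times D^3)$. The latter would give $X\simeq\bigvee^{2g}S^1$, contradicting your own part (1). The conclusion of (2) survives, for two reasons. First, $\Sigma_g\subset\mathbb{R}^3\subset\mathbb{R}^4$ has trivial normal bundle, so $\Sigma_g\times D^2$ embeds in $B^4$. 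Second, your Seifert-surface-in-a-collar argument is also valid. With the corrected summand, this is precisely the input needed for the embedding $e$ above.
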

\begin{proof}
(1) Since $M$ is contractible then $X$ is homotopy equivalent to $\mathring{\Sigma}_g \times D^2$ with the cone on $(\partial \mathring{\Sigma}_g) \times D^2$ attached. This shows that $X \simeq \Sigma_g$.\\

(2) Note that any surface $\Sigma_g$ can smoothly embed in a 4-ball $B^4$, and thus there is an obvious embedding of $\Sigma\times D^2$ in 4-ball. So by adding handles on $X$ we can smoothly embed $X$ in $M\natural B^4$. Now notice that $M$ is a Mazur type 4-manifold, so the double of $M$ is diffeomorphic to $S^4$. And thus $X$ can be smoothly embedded in $B^4 \subset \mathbb{R}^4$. 

(3) Let $\tau_M\in\operatorname{Diff}(M,\partial M)$ denote the boundary
Dehn twist, supported in the interior of $M$. Then $f$ is the
extension of $\tau_M^d$ by the identity over the attached genus-$g$
handle. Since $M$ is contractible, \cite[Theorem A]{OP} implies
that $\tau_M^d$ is topologically isotopic to the identity relative
to $\partial M$. Extending this isotopy by the identity gives the
required topological isotopy on $X$.

We claim that there is a smooth embedding
\[
    e\colon X\hookrightarrow M
\]
which is the identity on a slightly shrunken copy of $M$ containing
$\operatorname{supp}(\tau_M^d)$. Indeed, since the attaching circle
is a $0$-framed unknot contained in a $3$-ball in $\partial M$, we
may identify
\[
    X\cong M\mathbin{\natural}(\Sigma_g\times D^2),
\]
preserving this shrunken copy of $M$. Embedding
$\Sigma_g\times D^2$ into $B^4$ gives an embedding
\[
    X\hookrightarrow M\mathbin{\natural}B^4\cong M,
\]
where the last identification can also be chosen to fix this copy
of $M$.

Suppose that $f$ were smoothly isotopic relative to $\partial X$
to a diffeomorphism $\ell$ supported in a smoothly embedded ball
$B\subset\operatorname{int}(X)$. Transporting this isotopy through
$e$ and extending by the identity gives a smooth isotopy relative
to $\partial M$ from $\tau_M^d$ to a diffeomorphism supported in
$e(B)$. This contradicts \cite[Theorem D]{KLMME1}, since $d\neq 0$.

Note that $f$ is smoothly pseudo-isotopic to the identity
relative to $\partial X$. Indeed, \cite[Theorem 3.1]{OP} gives
a smooth pseudo-isotopy from $\tau_M^d$ to the identity relative
to $\partial M$, which extends by the identity over the attached
handle.
\end{proof}


We will now show the existence of exotic diffeomorphism on aspherical 4-manifold. 

\begin{proof}[Proof of Theorem~\ref{thm:main_intro}]
For each $g\geq 1$, let $X_g$ be the manifold constructed in (4),
and let $\tau_g\in\operatorname{Diff}(X_g,\partial X_g)$ be the
extension of the boundary Dehn twist on the Mazur manifold.
Choose a flag triangulation of $\partial X_g$, and let
$W_{0,g}=[W_g,W_g]$ be the orientation-preserving, torsion-free,
finite-index subgroup of the associated Coxeter group. Set
\[
    M_g:=Q(X_g),\qquad F_g:=Q(\tau_g).
\]
By Proposition~\ref{Prop:3.6}(1) and Proposition 2.4, $M_g$ is aspherical.
By construction, it is smooth, closed, and orientable.

The proof of Proposition~\ref{Prop:3.6} gives a topological isotopy and a
smooth pseudo-isotopy from $\tau_g$ to the identity relative to
$\partial X_g$. These can be chosen to be the identity on a
collar of the boundary. Reflecting them chamberwise and taking
the quotient by $W_{0,g}$ shows that $F_g$ is topologically
isotopic and smoothly pseudo-isotopic to
$\operatorname{id}_{M_g}$.

For every $n\neq 0$, Proposition~\ref{Prop:3.6}(3), applied with $d=n$,
shows that $\tau_g^n$ is not smoothly isotopic relative to the
boundary to a diffeomorphism supported in a ball. The remaining
hypotheses of Theorem~\ref{theorem:main1} hold by Proposition~\ref{Prop:3.6}(1)--(2).
Consequently,
\[
    F_g^n=Q(\tau_g^n)
\]
is not smoothly isotopic to the identity.

To obtain infinitely many homeomorphism types, consider the
folding map
\[
    r_g\colon M_g\longrightarrow X_g,
\]
which identifies each chamber with $X_g$. This is well-defined
because the reflection identifications preserve the
$X_g$-coordinate. If $i_g\colon X_g\hookrightarrow M_g$ denotes
the inclusion of the fundamental chamber, then
$r_g\circ i_g=\operatorname{id}_{X_g}$. Hence $(i_g)_*$ is
injective on homology, and
\[
    b_1(M_g)\geq b_1(X_g)=2g.
\]
Thus the first Betti numbers are unbounded, and a subsequence
of the $M_g$ consists of pairwise non-homeomorphic manifolds.

The contractible-chamber examples described in Section 1.2 are
obtained similarly by taking $X$ to be the original Mazur
manifold. Non-locality of every nonzero power of its boundary
Dehn twist follows from \cite[Theorem D]{KLMME1}, while
\cite[Theorem A and Theorem 3.1]{OP} give the required relative
topological isotopy and smooth pseudo-isotopy. The hypotheses
of Theorem~\ref{theorem:main1} hold because $X$ embeds in $B^4$ and
$\pi_1(\partial X)\neq 1$. These examples also satisfy the
conclusions of Lemma~\ref{lemma:center-FJ}.
\end{proof}

We will now prove the existence of an exotic diffeomorphism, whose every lift to any cover is still exotic. 
\begin{proof} [Proof of Theorem~\ref{thm:exotic-on-every-cover}]
Choose a contractible Mazur manifold $X$ with
$\partial X\not\cong S^3$, smoothly embedded in $B^4$, and a
diffeomorphism $\tau\in\operatorname{Diff}(X,\partial X)$
which is topologically isotopic to the identity relative to
the boundary, but is not smoothly isotopic relative to the
boundary to any diffeomorphism supported in a ball, as in
the construction of Theorem~\ref{thm:main_intro}.
Set $M=Q(X)$ and $f=Q(\tau)$. Since $X$ is contractible,
$D(X)$ is contractible and is the universal cover of $M$.
Moreover, $M$ is aspherical and hence irreducible by
Theorem~\ref{thm:aspherical-irreducible}.

Let $p\colon\widehat M\to M$ be any connected cover.
We may identify $\widehat M=D(X)/H$ for a subgroup
$H\leq W_0$. The chamberwise diffeomorphism $D(\tau)$
commutes with the $W$-action, so it descends to a lift
$\widehat f$ of $f$. Reflecting a boundary-relative
topological isotopy from $\operatorname{id}_X$ to $\tau$
and taking the quotient by $H$ shows that $\widehat f$
is topologically isotopic to the identity.

Suppose that $\widehat f$ were smoothly isotopic to the
identity. Lifting this isotopy to $D(X)$, starting at the
identity, gives a smooth isotopy whose endpoint is
$h\circ D(\tau)$ for some $h\in H$. By Lemma~\ref{lem: j and j' nonisotopic},
applied to a core of the fundamental chamber, we must
have $h=1$. Thus $D(\tau)$ is smoothly isotopic to the
identity. The cutoff and re-embedding argument in the
proof of Theorem~\ref{theorem:main1} then produces a smooth isotopy,
relative to $\partial X$, from $\tau$ to a diffeomorphism
supported in a ball: cut off the isotopy near the track
of the fundamental chamber, contain its support in a
finite boundary sum of chambers, and embed the other
summands into a ball in $X$. This contradicts the choice
of $\tau$. Hence $\widehat f$ is exotic.
\end{proof}

\subsection{Comparison with higher-dimensional case}\label{subsection:highdim}

We now draw attention to the following result about aspherical manifolds in high dimension which, as we shall explain below, is in sharp contrast with our results in dimension $4$:

\begin{proposition}\label{proposition:highdim}
 Let $M$ be a closed smooth aspherical manifold of dimension $d \geq 9$, whose fundamental group $\pi_1 (M)$ is center-less and satisfies the Farrel Jones conjectures (cf. \cite[\S 5.2]{HLLR}). Then the canonical map
\[
\pi_0 \mathrm{Diff}(M, \partial M) \to \pi_0 \mathrm{Homeo}(M, \partial M) \times \pi_0 \widetilde{\mathrm{Diff}}(M, \partial M)
\]
is injective.
\end{proposition}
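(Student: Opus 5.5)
We use the surgery-theoretic description of the difference between $\mathrm{Diff}(M)$ and $\mathrm{Homeo}(M)$ in high dimensions, together with the rigidity that the Farrell--Jones conjectures give for aspherical manifolds. The argument proceeds in three steps.

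\textbf{Step 1 (reduction to the pseudo-isotopy kernel).} Let $f \in \mathrm{Diff}(M)$ map to the trivial element on the right-hand side. Since $f$ is smoothly pseudo-isotopic to the identity, the obstruction to smoothly isotoping $f$ to $\mathrm{id}_M$ lives in $\pi_0$ of the smooth pseudo-isotopy space, modulo the image of the boundary map. By Hatcher--Wagoner and Igusa, for $d \geq 7$ (comfortably for $d\geq 9$) this obstruction is governed by $\mathrm{Wh}_2(\pi_1 M)$ together with a $\mathrm{Wh}_1^+(\pi_1 M;\mathbb{Z}/2\times\pi_2 M)$-type term. Both vanish here: $\pi_2 M = 0$ because $M$ is aspherical, and the Farrell--Jones conjecture in algebraic $K$-theory for a torsion-free group gives $\mathrm{Wh}(\pi_1M)=\mathrm{Wh}_2(\pi_1 M)=0$. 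More invariantly, I would cite Farrell--Jones/Enkelmann--L\"uck--et al.\ (as in \cite[\S 5.2]{HLLR}): for such $\pi_1 M$ the stable smooth pseudo-isotopy space $\mathcal{P}(M)$ is computed by an assembly map whose relevant $\pi_0$ and $\pi_1$ groups reduce to homological terms involving only $\pi_1 M$ and $\mathrm{Wh}$-groups of $\pi_1 M$. Igusa's stability range (valid for $d\geq 9$, which explains the dimension hypothesis) transfers the stable computation to $\pi_0$ and $\pi_1$ of the unstable space.

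\textbf{Step 2 (comparing smooth and topological isotopy).} The hypothesis that $f$ is topologically isotopic to the identity is used through the fibration $\mathrm{Homeo}(M)/\mathrm{Diff}(M) \simeq$ a union of components of the space of smoothings, which by smoothing theory is identified with sections of a bundle with fibre $\mathrm{Top}(d)/O(d)$. An element of the kernel therefore comes from $\pi_1$ of the smoothing space, which is $[M, \Omega(\mathrm{Top}/O)]$ in the relevant range. In parallel, the pseudo-isotopy triviality places $f$ in the image of $\pi_1(\widetilde{\mathrm{Diff}}(M)/\mathrm{Diff}(M))$. Comparing the smooth and topological versions of the Weiss--Williams / Hatcher spectral sequences, with block diffeomorphisms in between, shows that an element trivial in both $\pi_0\widetilde{\mathrm{Diff}}$ and $\pi_0\mathrm{Homeo}$ is detected by the difference between smooth and topological pseudo-isotopy, namely $\pi_1$ of $\mathcal{P}^{\mathrm{Top}}(M)/\mathcal{P}^{\mathrm{Diff}}(M)$. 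By Farrell--Jones assembly this reduces to $H_*(M;\Omega\mathrm{Wh}^{\mathrm{Diff}}(*)) \to H_*(M;\Omega \mathrm{Wh}^{\mathrm{Top}}(*))$. The difference here is controlled by the same $\mathrm{Top}/O$-type homology, which already injects into the homeomorphism side.

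\textbf{Step 3 (the role of the center).} The residual ambiguity is the action of $\pi_1 \mathrm{Homeo}(M)$ and $\pi_1\widetilde{\mathrm{Diff}}(M)$ on these groups. For aspherical $M$ with Farrell--Jones, $\pi_1\mathrm{Homeo}(M) \cong Z(\pi_1 M)$ in this range, and this group is trivial by the center-less hypothesis. So no loops in $\mathrm{Homeo}(M)$ can produce kernel elements via the boundary map, and injectivity follows.

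I expect the main obstacle to be Step 2. There one must carefully identify the kernel of $\pi_0\mathrm{Diff}\to\pi_0\mathrm{Homeo}\times\pi_0\widetilde{\mathrm{Diff}}$ with a quotient of $\pi_1$ of a comparison space and verify that the Farrell--Jones computation kills it. My first attempt would be to take the statement directly from \cite[\S 5.2]{HLLR}, which treats exactly aspherical Farrell--Jones manifolds, and check that their stable-range hypothesis is $d\geq 9$.
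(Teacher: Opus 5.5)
\textbf{There is a genuine gap.} Your Step 1 asserts a false vanishing. With $\pi_2 M=0$, the Hatcher--Wagoner--Igusa term becomes $\mathrm{Wh}_1^+(\pi_1 M;\mathbb{Z}/2)$. This is the $\mathbb{F}_2$-vector space on the nontrivial conjugacy classes of $\pi_1 M$. The $\mathbb{Z}/2$ in the coefficients does not disappear, so the group is nonzero whenever $\pi_1 M\neq 1$.

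The Farrell--Jones conjecture does not remove it either. For pseudo-isotopy and $A$-theory, the infinite cyclic subgroups contribute non-vanishing Nil-terms, unlike for $K_*(\mathbb{Z}\pi)$. Concretely, $T^n$ is aspherical and $\mathbb{Z}^n$ satisfies Farrell--Jones, yet $\pi_0$ of its pseudo-isotopy space contains $\mathrm{Wh}_1^+(\mathbb{Z}^n;\mathbb{Z}/2)$. These classes give the Hatcher/Hsiang--Sharpe diffeomorphisms that are pseudo-isotopic but not isotopic to the identity. Note also that if Step 1 held, you would get injectivity of $\pi_0\mathrm{Diff}(M)\to\pi_0\widetilde{\mathrm{Diff}}(M)$ by itself. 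Neither the topological-isotopy hypothesis nor the center hypothesis would play any role.

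Step 3 contains a related misidentification. What Farrell--Jones gives via \cite[\S 5.2]{HLLR} is the identity block Borel statement $\pi_1\widetilde{\mathrm{Homeo}}(M)\cong Z(\pi_1 M)$, for \emph{block} homeomorphisms. It is not a statement about $\pi_1\mathrm{Homeo}(M)$, which differs from this by pseudo-isotopy contributions.

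\textbf{The missing idea.} You should not try to make the pseudo-isotopy obstruction vanish. Instead, show that it is the same in the smooth and topological categories. Let $F^{\mathrm{Diff}}(M)$ and $F^{\mathrm{Top}}(M)$ be the homotopy fibres of $\mathrm{Diff}(M)\to\widetilde{\mathrm{Diff}}(M)$ and $\mathrm{Homeo}(M)\to\widetilde{\mathrm{Homeo}}(M)$.
\begin{enumerate}
\item Weiss--Williams map these fibres to $\Omega^{\infty+2}(\mathrm{Wh}^{\mathrm{Diff}}(M)_{hC_2})$ and $\Omega^{\infty+2}(\mathrm{Wh}^{\mathrm{Top}}(M)_{hC_2})$ by $\phi(d)$-connected maps.
\item Igusa's bound $\phi(d)\geq\min(\frac{d-4}{3},\frac{d-7}{2})$ gives $\phi(d)\geq 1$ for $d\geq 9$. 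This is where the dimension hypothesis enters.
\item The map $\mathrm{Wh}^{\mathrm{Diff}}(M)\to\mathrm{Wh}^{\mathrm{Top}}(M)$ is $3$-connected for every space, with no Farrell--Jones input. Hence it is $1$-connected after $(-)_{hC_2}$ and $\Omega^{\infty+2}$, and so $\pi_0F^{\mathrm{Diff}}(M)\cong\pi_0F^{\mathrm{Top}}(M)$.
\end{enumerate}

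The diagram chase then goes as follows. Pseudo-isotopy triviality lifts $f$ to some $x\in\pi_0F^{\mathrm{Diff}}(M)$. The image of $x$ in $\pi_0F^{\mathrm{Top}}(M)$ maps to $[f]=1$ in $\pi_0\mathrm{Homeo}(M)$. Exactness of $\pi_1\widetilde{\mathrm{Homeo}}(M)\to\pi_0F^{\mathrm{Top}}(M)\to\pi_0\mathrm{Homeo}(M)$, together with $\pi_1\widetilde{\mathrm{Homeo}}(M)=Z(\pi_1 M)=1$, forces that image to be trivial. Hence $x$ is trivial, and so is $[f]$. In particular, $\pi_1\widetilde{\mathrm{Diff}}(M)$ never needs to be controlled, and Farrell--Jones is used only for the block statement, not for killing Whitehead-type groups.

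Your Step 2 gestures toward a smooth/topological comparison. However, it replaces the isomorphism on $\pi_0$ of the fibres by the unsupported claim that the difference is ``controlled by $\mathrm{Top}/O$-type homology which injects into the homeomorphism side.'' That is not an argument, and the smoothing-theory description of $\mathrm{Homeo}(M)/\mathrm{Diff}(M)$ is not needed.
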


The proof of Proposition \ref{proposition:highdim} will be given below. Before discussing that, we note that by Lemma \ref{lemma:center-FJ} below, it follows that in our examples of Theorem \ref{thm:main_intro} one can ensure that $\pi_1 (M)$ is center-less and satisfies the Farrel--Jones conjectures. Thus Theorem \ref{thm:main_intro} exhibits a phenomenon in aspherical manifolds of dimension $d=4$ which cannot occur in dimensions $d\geq 9$ by Proposition \ref{proposition:highdim}.

\begin{lemma}\label{lemma:center-FJ}
Let $X$ be a smooth compact simply-connected $d$--dimensional manifold
with connected nonempty boundary. Suppose that $d\geq 3$ and $\pi_1(\partial X)\neq\{1\}$. Let $\mathcal T$ be a finite flag PL triangulation of $\partial X$, let
$W=W_{\mathcal T}$ be the associated right-angled Coxeter group, and let
$W_0\leq W$ be a torsion-free subgroup of finite index. Then
\[
Z\bigl(\pi_1(Q(X))\bigr)=1,
\]
and $\pi_1(Q(X))$ satisfies the Farrell--Jones conjectures.
\end{lemma}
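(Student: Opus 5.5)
Since $X$ is simply connected, the fundamental group of the Davis manifold is controlled entirely by the Coxeter group. The plan is to identify $\pi_1(Q(X))$ with a concrete group built from $W_0$ and $\pi_1(D(X))$, and then read off both properties from known results about such groups.

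\textbf{Step 1: compute $\pi_1(D(X))$.} By Proposition~\ref{prop:boundarysum}, $P_n\simeq X_0\vee\cdots\vee X_n$ is simply connected. Since the $P_n$ exhaust $D(X)$ by compact sets, $D(X)$ is simply connected. Thus $D(X)\to Q(X)$ is the universal cover and
\[
\pi_1(Q(X))\cong W_0 .
\]
The remaining problem is purely group-theoretic: show that a torsion-free finite-index subgroup $W_0$ of the right-angled Coxeter group $W=W_{\mathcal T}$ has trivial center and satisfies the Farrell--Jones conjectures.

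\textbf{Step 2: Farrell--Jones.} Coxeter groups act properly and cocompactly on the CAT(0) Davis complex $\Sigma_W$ (Moussong, Davis). By Bartels--Lück, and Wegner for the $L$-theory/K-theory version with coefficients, CAT(0) groups satisfy the $K$- and $L$-theoretic Farrell--Jones conjectures with coefficients in additive categories and finite wreath products. This class is closed under passage to subgroups, so $W_0$ inherits the conjectures.

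\textbf{Step 3: trivial center.} The point is to use $\pi_1(\partial X)\neq 1$, which forces $\mathcal T$ not to be a join in a degenerate way. The key steps are:
\begin{itemize}
\item First decompose $W$ according to the join decomposition $\mathcal T=\mathcal T_1*\cdots*\mathcal T_m\ast\sigma$ into irreducible factors, where $\sigma$ is a simplex. This gives $W=W_1\times\cdots\times W_m\times(\mathbb Z/2)^k$.
\item Each irreducible non-simplex factor is either $D_\infty$, coming from a two-point suspension factor $S^0$, or is infinite and non-affine-like. The latter has trivial center, and in fact each of its finite-index subgroups does: by Paris/Qi, an infinite irreducible Coxeter group is ``strongly'' centerless on finite-index subgroups, via the fact that its action on the CAT(0) complex or Tits boundary has no invariant flats.
\item Since $\partial X$ is a closed $(d-1)$-manifold, a simplex factor cannot occur. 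Indeed, a triangulated manifold that is a join with a simplex is a cone, which has nonempty boundary. So $k=0$.
\item An $S^0$ factor gives a suspension. If $\mathcal T=S^0*L$, then $L$ must be a homology $(d-2)$-manifold. Homology manifold join factors of a manifold are spheres, so repeated $S^0$ factors combined with irreducible factors still leave $\partial X$ a join.
\end{itemize}

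The main obstacle is excluding $D_\infty$ factors, whose finite-index subgroups contain a central $\mathbb Z$. I would argue via the link structure. If $\mathcal T=S^0*L$, then $\partial X\cong\Sigma L$, and since $\partial X$ is a manifold, $L$ is a homology sphere or a manifold whose suspension is a manifold, that is, a homology sphere. Combining this with simple connectivity of the link when $d-1\geq 3$, and arguing by dimension otherwise, a join $S^0*L$ is simply connected as soon as $L$ is connected, which holds when $\dim L\geq 1$, i.e.\ $d\geq 3$. So $\pi_1(\partial X)\neq 1$ forces $\mathcal T$ to have no $S^0$ join factor, using $d\geq3$. More generally, any nontrivial join of two nonempty complexes of total dimension $\geq1$ with one factor connected is simply connected. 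So $\pi_1(\partial X)\neq1$ gives that $\mathcal T$ is join-irreducible and not a simplex. Then $W$ is irreducible and infinite. It is non-affine, since the only affine right-angled Coxeter group is $D_\infty$, and then $\mathcal T=S^0$, so $d=1$. Hence every finite-index subgroup of $W$ is centerless, in particular $W_0$. I would cite the result that finite-index subgroups of infinite irreducible non-affine Coxeter groups have trivial center (Paris, ``Irreducible Coxeter groups'').
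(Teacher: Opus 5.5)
Your proposal is correct and follows essentially the same route as the paper. You get $\pi_1(Q(X))\cong W_0$ because $D(X)$ is simply connected, and Farrell--Jones from the proper cocompact action on the CAT(0) Davis complex via Bartels--L\"uck and Wegner. You get trivial center by showing $\mathcal T$ is not a nontrivial join, so $W$ is irreducible, infinite and non-affine, and then invoking Qi's (or Paris's) theorem on finite-index subgroups.

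The one step you assert rather than prove is that the factors of a nontrivial join $\mathcal T=\mathcal T_1*\mathcal T_2$ are spheres, which you need in order to have a connected factor for your join-connectivity argument. The paper proves exactly this by noting that $\operatorname{lk}_{\mathcal T}(\sigma_1)=\mathcal T_2$ for a maximal simplex $\sigma_1$ of $\mathcal T_1$, and symmetrically. That observation also makes $\partial X$ a PL sphere of dimension $d-1\geq 2$ outright, so your separate simplex-factor and $S^0$-factor analysis becomes unnecessary.
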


\begin{proof}
The simple connectivity of $X$
implies that $D(X)$ is simply connected. Hence $\pi_1(Q(X))\cong W_0$. 

We first prove that $W_0$ has trivial center. Recall that a Coxeter system
is irreducible if its Coxeter diagram is connected
\cite[Definition~3.5.3]{Davis-coxeter}. For a right-angled Coxeter group
with nerve $\mathcal T$, this is equivalent to saying that $\mathcal T$
is not a nontrivial simplicial join; see
\cite[\S3.5,4.1,7.1]{Davis-coxeter}. Suppose that $\mathcal T=\mathcal T_1*\mathcal T_2$ is a nontrivial simplicial join. If $\sigma_i$ is a maximal simplex of
$\mathcal T_i$, then $\operatorname{lk}_{\mathcal T}(\sigma_1)=\mathcal T_2$ and $\operatorname{lk}_{\mathcal T}(\sigma_2)=\mathcal T_1$. Since $\mathcal T$ is a PL triangulation of the closed manifold
$\partial X$, these links are PL spheres. Hence $\mathcal T_1$ and
$\mathcal T_2$ are PL spheres, and therefore so is
$\mathcal T=\mathcal T_1*\mathcal T_2$. Thus
$\partial X$ is a PL sphere, contradicting
$\pi_1(\partial X)\neq\{1\}$ since $d\geq3$. Therefore $W$ is
irreducible.

Since $\dim\mathcal T=d-1\geq2$, the classification of irreducible
spherical and affine Coxeter groups
\cite[Theorems~C.1.2 and~C.1.3]{Davis-coxeter} implies that the
right-angled Coxeter group $W$ is infinite and non-affine. By
\cite[Theorem~1.1]{Qi}, every finite-index subgroup of $W$ has trivial
center. Hence $Z\bigl(\pi_1(Q(X))\bigr)=Z(W_0)=1$.

Finally, by \cite[Theorem~12.3.3]{Davis-coxeter}, the Davis complex of $W$ is
CAT$(0)$. Since $\mathcal T$ is finite, $W$ acts properly and
cocompactly on this finite-dimensional CAT$(0)$ space, and so does the
finite-index subgroup $W_0$. Thus $W_0$ is a CAT$(0)$ group. The
$L$-theoretic Farrell--Jones conjecture for CAT$(0)$ groups follows
from \cite[Theorem~B]{BartelsLuck}, and the $K$-theoretic
Farrell--Jones conjecture follows from \cite{Wegner}. Therefore
$\pi_1(Q(X))\cong W_0$ satisfies both Farrell--Jones conjectures.
\end{proof}

\begin{proof}[Proof of Proposition \ref{proposition:highdim}]
If $M$ be is a closed aspherical $d$-dimensional of dimension $d\geq 4$ then, as explained in \cite[\S 5.2]{HLLR}, provided that $\pi_1 (M)$ is a group satisfying the Farrel--Jones conjectures then $M$ satisfies the ``identity block Borel conjecture'': 
\[
\pi_1 \widetilde{\mathrm{Homeo}}(M) \cong Z(\pi_1 (M)) \quad , \quad \pi_i  \widetilde{\mathrm{Homeo}}(M) = 0 \text{ for } i \geq 2 .
\]
Thus, assuming the center $Z(\pi_1 (M))$ is trivial then we have $\pi_1 \widetilde{\mathrm{Homeo}}(M) = \{ 1\}$. Thus, Proposition \ref{proposition:highdim} follows from Lemma \ref{lemma:samu} below. 
\end{proof}

\begin{lemma}[Communicated by Samuel Muñoz-Echániz]\label{lemma:samu}
Let $M$ be a smooth compact $d$-dimensional manifold (possibly $\partial M \neq \emptyset$). Suppose that $\pi_1 \widetilde{\mathrm{Homeo}}(M, \partial M) = \{1\}$ and $d\geq 9$. Then the canonical map
\[
\pi_0 \mathrm{Diff}(M, \partial M) \to \pi_0 \mathrm{Homeo}(M, \partial M) \times \pi_0 \widetilde{\mathrm{Diff}}(M, \partial M)
\]
is injective.
\end{lemma}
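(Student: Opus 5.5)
We would argue with the comparison fibration between smooth and topological block automorphisms, combined with smoothing theory. Let $\widetilde{\mathrm{Diff}}(M,\partial M)$ and $\widetilde{\mathrm{Homeo}}(M,\partial M)$ denote the block diffeomorphism and block homeomorphism groups. Then $\pi_0\widetilde{\mathrm{Diff}}(M,\partial M)$ is the smooth pseudo-isotopy mapping class group, and there are homotopy fibre sequences
\[
\mathrm{Diff}(M,\partial M)\to \widetilde{\mathrm{Diff}}(M,\partial M) \to \widetilde{\mathrm{Diff}}(M,\partial M)/\mathrm{Diff}(M,\partial M),
\]
together with the analogous sequence for $\mathrm{Homeo}$. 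The plan is as follows.

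\textbf{Step 1: reduce to comparing the two quotients.} Let $f\in\mathrm{Diff}(M,\partial M)$ lie in the kernel of the canonical map. Since $f$ is trivial in $\pi_0\widetilde{\mathrm{Diff}}(M,\partial M)$, the exact sequence
\[
\pi_1\widetilde{\mathrm{Diff}}(M,\partial M)\to\pi_1\big(\widetilde{\mathrm{Diff}}/\mathrm{Diff}\big)\xrightarrow{\ \partial\ }\pi_0\mathrm{Diff}(M,\partial M)\to\pi_0\widetilde{\mathrm{Diff}}(M,\partial M)
\]
shows that $[f]=\partial(\alpha)$ for some $\alpha\in\pi_1(\widetilde{\mathrm{Diff}}/\mathrm{Diff})$. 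Concretely, $\alpha$ is the class of a smooth pseudo-isotopy $F$ from $\mathrm{id}$ to $f$. Next we map $\alpha$ into the topological sequence. There $\partial_{top}(\alpha_{top})=[f]_{top}=1$, because $f$ is topologically isotopic to the identity. Hence $\alpha_{top}$ lifts to $\pi_1\widetilde{\mathrm{Homeo}}(M,\partial M)$, and this group is trivial by hypothesis. Therefore $\alpha_{top}=1$ in $\pi_1(\widetilde{\mathrm{Homeo}}/\mathrm{Homeo})$.

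\textbf{Step 2: injectivity of the smooth-to-topological comparison.} The remaining claim is that the map
\[
\pi_1\big(\widetilde{\mathrm{Diff}}/\mathrm{Diff}\big)(M,\partial M)\to\pi_1\big(\widetilde{\mathrm{Homeo}}/\mathrm{Homeo}\big)(M,\partial M)
\]
is injective for $d\geq 9$. Given this, $\alpha=1$ and hence $[f]=\partial(\alpha)=1$, which proves the lemma.

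To prove this injectivity, use the Hatcher spectral sequence, valid in the pseudo-isotopy stable range. It identifies $\pi_1(\widetilde{\mathrm{Diff}}/\mathrm{Diff})$ with a quotient of $\pi_0$ of the concordance (pseudo-isotopy) space $C(M)$; the quotient is by the involution action, i.e.\ it is a coinvariants group $H_0(\mathbb{Z}/2;\pi_0 C(M))$. The same holds in the topological category. Igusa's stability range gives convergence in low degrees once $d$ is large; the bound $d\geq 9$ is what is needed for the relevant terms, including the $\pi_1 C$ term, which enters through the differential.

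The key input is the smooth-versus-topological concordance comparison. By Burghelea--Lashof smoothing theory for concordance spaces, the homotopy fibre of $C^{\mathrm{Diff}}(M)\to C^{\mathrm{Top}}(M)$ is expressed through sections of a bundle with fibre $\Omega^{d+1}(\mathrm{Top}/O)$-type spaces. Stably these are controlled by $\mathrm{Top}(d+1)/O(d+1)$, so on low homotopy the comparison is an isomorphism modulo terms coming from $\pi_*(\mathrm{Top}/O)$. This is where we would work directly rather than quote.

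\textbf{Main obstacle.} The hard part is Step 2. Here we must show that the smoothing-theory correction terms cannot produce a nontrivial kernel on $\pi_1$ of the quotient. Our first attempt would avoid spectral sequences entirely. By Morlet and Burghelea--Lashof, $\mathrm{Homeo}(M,\partial M)/\mathrm{Diff}(M,\partial M)$ is equivalent to the space of sections $\Gamma\big(M;\mathrm{Top}(d)/O(d)\big)$, and the same description holds for the block versions with $\mathrm{Top}/O$ in the stable range. This gives a map of fibre sequences. We would then compare
\[
\Gamma\big(M;\mathrm{Top}(d)/O(d)\big)\to\Gamma\big(M;\mathrm{Top}/O\big),
\]
using that $\mathrm{Top}(d)/O(d)\to\mathrm{Top}/O$ is highly connected for $d\geq 5$. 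A diagram chase in this map of fibre sequences should then reduce injectivity to the vanishing of $\pi_1\widetilde{\mathrm{Homeo}}(M,\partial M)$.

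The delicate points are the following, and we expect the dimension bound $d\geq 9$ to enter precisely here:
\begin{itemize}
\item verifying that the block and non-block section spaces agree on $\pi_0$ and $\pi_1$, which requires the connectivity range of $\mathrm{Top}(d)/O(d)\to\mathrm{Top}/O$;
\item bookkeeping the $\pi_1 C(M)$ term through the involution.
\end{itemize}
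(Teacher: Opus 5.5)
Your Step 1 is exactly the paper's diagram chase: your $\pi_1\big(\widetilde{\mathrm{Diff}}/\mathrm{Diff}\big)$ is the paper's $\pi_0 F^{\mathrm{Diff}}(M)$, and reducing the lemma to injectivity of $\pi_0F^{\mathrm{Diff}}(M)\to\pi_0F^{\mathrm{Top}}(M)$ is correct.

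\textbf{The gap.} That injectivity is the only non-formal content of the lemma, and you never establish it. Both of your routes stop exactly where a kernel could appear.

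In the Hatcher spectral sequence route you correctly get $\pi_1\big(\widetilde{\mathrm{Diff}}/\mathrm{Diff}\big)=E^2_{00}$, the cokernel of $d^1\colon\pi_0C(M\times I)\to\pi_0C(M)$. No $\pi_1C$ term can reach total degree $0$, so that part of your bookkeeping is a red herring. To compare the smooth and topological cokernels you then need $\pi_0C^{\mathrm{Diff}}\to\pi_0C^{\mathrm{Top}}$ to be injective for $M$ and surjective for $M\times I$. Saying it is ``an isomorphism modulo terms coming from $\pi_*(\mathrm{Top}/O)$'' is not an argument for that; those correction terms are the whole issue.

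The paper supplies the missing ingredient. The map $\mathrm{Wh}^{\mathrm{Diff}}(M)\to\mathrm{Wh}^{\mathrm{Top}}(M)$ is $3$-connected, since its fibre is $M_+\wedge\mathrm{Wh}^{\mathrm{Diff}}(*)$, which is $2$-connected. Hence the induced map on $\Omega^{\infty+2}\big((-)_{hC_2}\big)$ is $1$-connected. Weiss--Williams identifies $F^{\mathrm{Diff}}(M)$ and $F^{\mathrm{Top}}(M)$ with these spaces through degree $\phi(d)$. Igusa's bound gives $\phi(d)\ge 1$ once $d\ge 9$, which is where the dimension hypothesis enters.

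\textbf{Your second route.} Comparing non-block and block smoothing theory is sound and genuinely different from the paper's argument, but the decisive input is again missing, and ``highly connected'' is not enough. Over the $d$-dimensional base $M$, a $k$-connected map of fibres induces a $(k-d)$-connected map of section spaces. You need injectivity on $\pi_1$, so you need $k\ge d+2$ exactly. This is Kirby--Siebenmann's stability theorem (Essay V): $\mathrm{Top}(d)/O(d)\to\mathrm{Top}/O$ is $(d+2)$-connected for $d\ge 5$.

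With that input, $\Gamma_\partial\big(M;\mathrm{Top}(d)/O(d)\big)\to\Gamma_\partial\big(M;\mathrm{Top}/O\big)$ is $2$-connected, so $\pi_1(\mathrm{Homeo}/\mathrm{Diff})\to\pi_1\big(\widetilde{\mathrm{Homeo}}/\widetilde{\mathrm{Diff}}\big)$ is injective. The chase then runs as follows:
\begin{enumerate}
\item Since $[f]=1$ in $\pi_0\mathrm{Homeo}(M,\partial M)$, we can write $[f]=\partial\beta$.
\item The image $\bar\beta$ of $\beta$ satisfies $\tilde\partial\bar\beta=[f]=1$ in $\pi_0\widetilde{\mathrm{Diff}}(M,\partial M)$.
\item So $\bar\beta$ comes from $\pi_1\widetilde{\mathrm{Homeo}}(M,\partial M)=1$ and is trivial.
\item Injectivity gives $\beta=1$, hence $[f]=1$.
\end{enumerate}
This argument uses only $d\ge 5$. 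Your expectation that $d\ge 9$ is forced in the smoothing-theory comparison is therefore misplaced; the bound $d\ge 9$ comes from the paper's route through Weiss--Williams and Igusa's concordance stable range.

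The paper's approach works directly with pseudo-isotopy and Whitehead spectra, the natural language for $F^{\mathrm{Diff}}$. Your second approach, once you supply the Kirby--Siebenmann input, is more classical and gives a better dimension range.
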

\begin{proof}
The proof uses the ``block-pseudoisotopy fiber sequence'', which we briefly recall. Let $F^{\mathrm{Diff}} (M)$, resp. $F^{\mathrm{Top}} (M)$, denote the homotopy fiber of the canonical map
\[
\mathrm{Diff}(M , \partial M) \to \widetilde{\mathrm{Diff}}(M , \partial M) \quad , \quad \text{resp.} \,\, \mathrm{Homeo}(M , \partial M) \to \widetilde{\mathrm{Homeo}}(M , \partial M).
\]
There are natural maps
\[
\begin{tikzcd}
    F^{\mathrm{Diff}}(M) \arrow{r}\arrow{d} &  \Omega^{\infty +2} \big( \mathrm{Wh}^{\mathrm{Diff}}(M)_{hC_2}\big) \arrow{d} \\
    F^{\mathrm{Top}}(M) \arrow{r} &  \Omega^{\infty +2} \big( \mathrm{Wh}^{\mathrm{Top}}(M)_{hC_2}\big)
\end{tikzcd}
\]
where $\mathrm{Wh}^{\mathrm{Diff}}(M)_{hC_2}$, resp. $\mathrm{Wh}^{\mathrm{Top}}(M)_{hC_2}$, denote the smooth (resp. topological) Whitehead spectra of $M$ after homotopy quotient by their natural $C_2$ action. 

As explained in \cite[Remark 5.4]{samu}\footnote{In \cite[Remark 5.4]{samu}, the word ``connective'' should be read ``connected''.} the map $\mathrm{Wh}^{\mathrm{Diff}}(M) \to \mathrm{Wh}^{\mathrm{Top}}(M)$ is $3$-connected\footnote{An ``$n$-connected map'' means that a continuous map which induces an isomorphism on $\pi_i$ for $i < n$ and a surjection on $\pi_n$.}. (This holds for any space $M$; in particular, the dimension $d$ is irrelevant here.) Hence, this map remains $3$-connected after passing to the homotopy quotient by the $C_2$ action. Hence the vertical arrow on the right in the above diagram is $1$-connected.

On the other hand, when $d\geq 5$ the work of Weiss--Williams \cite[Theorem A]{weiss-williams} shows that the horizontal arrows in the above diagram are $\phi(d)$-connected, where $\phi(d)$ is the concordance stable range (which is the same for the smooth and topological categories). By the work of Igusa \cite{igusa} we have
\[
\phi(d) \geq \min \big( \frac{d-4}{3} , \frac{d-7}{2}\big).
\]
Since $d\geq 9$ then $\phi(d) \geq 1$, and thus the horizontal arrows in the above diagram are $1$-connected.

Thus, it follows from the above discussion that the map $ F^{\mathrm{Diff}}(M)  \to  F^{\mathrm{Top}}(M) $ induces an isomorphism on $\pi_0$. We thus have the following commutative diagram where the rows are exact and there is an isomorphism in the vertical arrow indicated:
\[
\begin{tikzcd}
    \pi_1 \widetilde{\mathrm{Diff}}(M , \partial M) \arrow{r} \arrow{d}& \pi_0 F^{\mathrm{Diff}}(M) \arrow{r} \arrow{d}{\cong} & \pi_0 \mathrm{Diff}(M , \partial M) \arrow{r} \arrow{d} & \pi_0 \widetilde{\mathrm{Diff}}(M , \partial M )\arrow{d}\\ 
    \pi_1 \widetilde{\mathrm{Homeo}}(M , \partial M) \arrow{r} & \pi_0 F^{\mathrm{Top}}(M) \arrow{r}  & \pi_0 \mathrm{Homeo}(M , \partial M) \arrow{r}  & \pi_0 \widetilde{\mathrm{Homeo}}(M , \partial M ).
\end{tikzcd}
\]

Finally, using $\pi_1 \widetilde{\mathrm{Homeo}}(M , \partial M) = \{ 0 \}$ a quick diagram chase establishes from the last diagram the triviality of the kernel of the map
\[
\pi_0 \mathrm{Diff}(M, \partial M) \to \pi_0 \mathrm{Homeo}(M, \partial M) \times \pi_0 \widetilde{\mathrm{Diff}}(M, \partial M).
\]
\end{proof}

\section{Proof of Theorem \ref{theorem:main1}}\label{section:proofmain}
We now discuss a sequence of results leading up to a proof of Theorem \ref{theorem:main1}.\\

For the remainder of this section, $X$ is a smooth compact $d$-dimensional manifold $X$ with $\partial X \neq \emptyset$, $\mathcal{T}$ is a smooth triangulation of $\partial X$, and $W_0$ is a torsion-free finite-index subgroup of the Coxeter group $W$ associated with $\mathcal{T}$. 

We consider a pair of diffeomorphisms $f, \ell \in \mathrm{Diff}(X, \partial X)$ which equal the identity map on a neighborhood of the boundary, and such that $\ell$ is supported inside a $4$-ball $B^4 \subset X$. We begin discussing the following

\begin{proposition}\label{prop1}
Suppose that $\mathcal{T}$ is flag, and that either $H_k (X, \mathbb{Z}) \neq 0$ in some degree $k >0$, or $X$ is contractible and $\pi_1 (\partial X) \neq \{ 1\}$. Supose that $F_t \in \mathrm{Diff}(Q(X))$, t $\in [0,1]$, is a smooth isotopy with $F_0 = Q(\ell)$ and $F_1 = Q(f)$. Let $\widetilde{F}_t \in \mathrm{Diff}(D(X))$ be the unique smooth isotopy lifting $F_t$ with $\widetilde{F}_0 = D(\ell)$. Then $\widetilde{F}_1 = D(f)$.
\end{proposition}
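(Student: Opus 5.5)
Since $F_0=Q(\ell)$ and $F_1=Q(f)$ both lift to the $W$-equivariant maps $D(\ell)$ and $D(f)$, and the deck group of $D(X)\to Q(X)$ is $W_0$, the endpoint $\widetilde F_1$ is some lift of $Q(f)$. Lifts of $Q(f)$ differ by deck transformations, so $\widetilde F_1 = h\circ D(f)$ for a unique $h\in W_0$. The plan is to show $h=1$ using a topological invariant that is preserved along the lifted isotopy and that distinguishes the chambers.

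\textbf{Step 1: $\widetilde F_t$ commutes with $W_0$.} For each $w\in W_0$, the family $w^{-1}\widetilde F_t w$ is again a lift of $F_t$ starting at $w^{-1}D(\ell)w = D(\ell)$. By uniqueness of lifts of isotopies, it equals $\widetilde F_t$.

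\textbf{Step 2: $h$ is central in $W_0$.} At $t=1$, Step 1 gives $w\,h\,D(f)\,w^{-1} = h\,D(f)$. Since $D(f)$ commutes with all of $W$, this says $whw^{-1}=h$ for every $w\in W_0$.

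\textbf{Step 3: Choose a detecting class in the fundamental chamber.} Use the inclusion of a chamber into $D(X)$, or an embedded class, to track where the isotopy moves $X_0$.
\begin{itemize}
\item If $H_k(X)\neq 0$ for some $k>0$, pick a nonzero class $\alpha\in H_k(X_0)$. By Proposition \ref{prop:boundarysum} the inclusions $X_i\to P_n\simeq X_0\vee\cdots\vee X_n$ are injective on homology with independent images, so $H_k(D(X))=\bigoplus_i H_k(X_i)$.
\item In the contractible case, I would instead use a nontrivial loop in $\partial X$, or a compactly supported or locally finite class, to detect the chamber.
\end{itemize}

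\textbf{Step 4: Compare pushforwards.} The maps $D(\ell)$ and $D(f)$ act chamberwise and fix the collars, so they act on $\bigoplus_i H_k(X_i)$ preserving each summand. Since $\ell$ is supported in a ball, it acts trivially, and $f$ acts as some automorphism $f_*$ of each summand. $\widetilde F_t$ is an isotopy, so $(\widetilde F_1)_* = (\widetilde F_0)_* = \mathrm{id}$. Hence $h_*\circ D(f)_* = \mathrm{id}$, so $h_*$ preserves the summand $H_k(X_0)$. But $h$ maps $H_k(X_0)$ isomorphically onto the summand $H_k(hX_0)$, which is independent of $H_k(X_0)$ unless $h=1$. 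Therefore $h=1$.

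\textbf{Main obstacle.} The hard part is the contractible case, where $H_*(X)=0$. There I would use the end or compactly supported structure. The isotopy $\widetilde F_t$ is $W_0$-equivariant, so it moves points a bounded distance (it descends to a compact quotient). It therefore induces the identity on locally finite homology, or on the action at infinity.

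Homology classes are now detected by the boundary pieces. The boundary connected sum decomposition gives $H_{d-1}$, or $\pi_1$, of the complement of $P_n$ containing copies of $\partial X$. Nontriviality of $\pi_1(\partial X)$ should make the translate $h\cdot(\text{link of }X_0)$ distinguishable, for instance via the proper embedding of $X_0$ compared through $\pi_1$ of the end of $D(X)\setminus X_0$. Equivalently, I would apply a lemma (likely the paper's Lemma \ref{lem: j and j' nonisotopic}) stating that the inclusions $j$ of $X_0$ and $j'=h\circ j$ of $hX_0$ are not isotopic when $h\neq 1$. Proving that lemma is the crux.
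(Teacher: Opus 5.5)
Your argument in the case $H_k(X)\neq 0$ is correct. It is essentially the paper's argument in cleaner form. The paper restricts to the core $\mathring X_0$ and uses compactness and isotopy extension to get an ambient isotopy $\Psi_t$ of $P_N\cong X_0\natural\cdots\natural X_N$ fixing $\partial P_N$ with $\Psi_1(\mathring X_0)=\mathring X_a=h\mathring X_0$. It then notes that $H_k(\mathring X_0)$ and $H_k(\mathring X_a)$ have different images in $H_k(P_N)$. Your homotopy-invariance computation on $H_k(D(X))=\bigoplus_i H_k(X_i)$ does the same job; the splitting is compatible in $n$ because of the commuting triangles in Proposition \ref{prop:boundarysum}.

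The genuine gap is the contractible case, which you explicitly leave open, and the tools you suggest do not close it. Locally finite homology sees nothing: $D(X)$ is a contractible open $d$-manifold, so $H^{lf}_*(D(X))\cong H^{d-*}(D(X))$ is concentrated in degree $d$. Likewise any homological invariant of complements, such as your $H_{d-1}$, fails, because $\partial X$ is a homology sphere. The idea you are missing is a relative $\pi_1$ computation inside $P_N$. Set $G_i=\pi_1(\partial X_i)\cong\pi_1(\partial X)$. Then $\pi_1(\partial P_N)\cong G_0*\cdots*G_N$, a free product because $\dim\partial X\geq 3$. Since every chamber is contractible, $P_N\setminus\mathring X_i\simeq\partial X_i$, and $\pi_1(\partial P_N)\to\pi_1(P_N\setminus\mathring X_i)$ kills every factor except $G_i$. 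Now $\Psi_1$ restricts to a diffeomorphism $P_N\setminus\mathring X_0\to P_N\setminus\mathring X_a$ that is the identity on $\partial P_N$, so the kernels for $i=0$ and $i=a$ would have to agree. They do not when $G\neq 1$ and $a\neq 0$: the factor $G_a$ lies in the first kernel and meets the second trivially.

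Some dimension-specific input like this is unavoidable, so ``$\pi_1(\partial X)\neq 1$ should make the translate distinguishable'' cannot be the whole argument. For $d=2$, take $X=D^2$ with the flag $4$-cycle triangulation, so $D(X)=\mathbb{R}^2$ and $Q(X)=T^2$. With $f=\ell=\mathrm{id}$, the loop $F_t(x)=x+(2t,0)$ lifts to a path from $\mathrm{id}$ to a nontrivial deck translation, even though $\pi_1(\partial X)\neq 1$.

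Your Step 2, which you never use afterwards, gives another way to finish: you showed $h\in Z(W_0)$. In the contractible case with $\pi_1(\partial X)\neq 1$ and $d\geq 3$, $\partial X$ is not a PL sphere. The argument in the proof of Lemma \ref{lemma:center-FJ} then shows $\mathcal T$ is not a join, so $W$ is irreducible, infinite and non-affine. Qi's theorem then gives $Z(W_0)=1$, forcing $h=1$ with no chamber-detecting invariant at all. This is consistent with the $d=2$ example, where $W=D_\infty\times D_\infty$ and $W_0\cong\mathbb{Z}^2$ is abelian. You need to supply one of these two arguments for the proof to be complete.
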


Indeed, there is a unique lift $\widetilde{F}_t \in \mathrm{Diff}(D(X))$ of $F_t$ with $\widetilde{F}_0 = \mathrm{Id}_{D(X)}$ for the following reason. Let $\pi : D(X) \to Q(X)$ be the covering map. By the homotopy lifting properties of covering map, there exists a unique lift of the homotopy $F \circ \pi : D(X) \times [0,1] \to Q(X)$ to a homotopy $\widetilde{F} : D(X) \times [0,1]\to D(X)$ with $\widetilde{F}_0 = D(\ell)$. By the uniqueness of this lift, it follows that $\widetilde{F}_t$ is a diffeomorphism for all $t\in [0,1]$ (by considering the unique lifts of the constant homotopies $F_t \circ F_{t}^{-1} = F_{t}^{-1} \circ F_{t} = \mathrm{Id}_{Q(X)}$).

To prove Proposition \ref{prop1} we will need Lemma \ref{lem: j and j' nonisotopic}, discussed below. Fix a word-length compatible ordering of $W$, giving $g_0 = 1, g_1 , g_2 , \ldots$ and a corresponding ordering $X_0 =X , X_1 = g_1 X , X_2 = g_2 X  \ldots $ of the chambers of $D(X)$. Let $\interior{X}\subset X$ be the complement of a closed collar neighborhood of $\partial X\subset X$. Let $\mathring{X}$ be the closure of $\interior{X}$ in $X=X_0$. Without loss of generality, we may assume that $f(\mathring{X})=\mathring{X}$. We use $j: \mathring{X}\to D(X)$ to denote the canonical inclusion.  
\begin{lemma}\label{lem: j and j' nonisotopic} Let $X$ and $\mathcal{T}$ be as in Proposition \ref{prop1}. Given $g\in W$ with $g\neq 1$, use $j'$ to denote the composition 
\[
\mathring{X}\xrightarrow{f}\mathring{X}\xrightarrow{j}D(X)\xrightarrow{g}D(X).
\]
Then the embeddings $j$ and $j'$ are not smoothly isotopic. 
\end{lemma}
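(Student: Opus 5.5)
The idea is to find a homotopy invariant of embeddings $\mathring{X}\to D(X)$ that distinguishes $j$ from $j'$. Since isotopic embeddings are in particular homotopic, it suffices to show that $j$ and $j'$ are not homotopic as maps. I would work inside a compact piece of $D(X)$. Any isotopy between $j$ and $j'$ has compact image, so it lies in some $P_n$ with $n$ large enough that $g=g_m$ satisfies $m\le n$. By Proposition \ref{prop:boundarysum}, $P_n\cong X_0\natural\cdots\natural X_n$, and under this identification the restrictions of $j$ and of $g\circ j$ to $\mathring{X}$ are, up to homotopy, the inclusions of the $0$-th and $m$-th summands. Thus $j$ and $j'$ become the maps $\iota_0$ and $\iota_m\circ f$ from $\mathring{X}\simeq X$ into $X_0\natural\cdots\natural X_n\simeq X_0\vee\cdots\vee X_n$. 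The task is to show that these maps are not homotopic, and I would split into the two cases of the hypothesis.

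\emph{Case $H_k(X;\mathbb{Z})\neq 0$ for some $k>0$.} Use the collapse map $c_0\colon X_0\vee\cdots\vee X_n\to X_0$, which is the identity on the $0$-th summand and sends the other summands to the wedge point. Then $c_0\circ\iota_0=\mathrm{id}$ induces the identity on $H_k$, while $c_0\circ\iota_m\circ f$ is constant and induces zero on $H_k$ for $k>0$. Since $H_k(X)\neq 0$, these maps are not homotopic, and hence $j$ and $j'$ are not homotopic in $P_n$.

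There is one subtlety: the homotopy might leave $P_n$. I would handle this by noting that any homotopy in $D(X)$ is compact and therefore lies in some $P_{n'}$ with $n'\ge n$, so the argument can be run with $n'$ in place of $n$. The collapse maps are defined for every $n'$, and $c_0$ on $P_{n'}$ restricts compatibly.

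\emph{Case $X$ contractible and $\pi_1(\partial X)\neq 1$.} Here every $X_i$ is contractible, so $P_n$ is contractible and homotopy classes of maps from $\mathring{X}$ carry no information. I would instead use the boundary, or equivalently the complement. An isotopy of embeddings extends, by the isotopy extension theorem, to an ambient compactly supported isotopy of $D(X)$, which carries the complement of $j(\mathring{X})$ to the complement of $j'(\mathring{X})$. I would compare the two complements relative to the end of $P_{n'}$, since they do differ in some detectable way.

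Concretely, take $P_{n'}$ large, with $\partial P_{n'}$ fixed by the ambient isotopy. The region $P_{n'}\setminus j(\mathring{X})$ is then a cobordism from $\partial\mathring{X}$ to $\partial P_{n'}$. By the boundary-sum structure it is $(\partial X\times I)\natural X_1\natural\cdots\natural X_{n'}$, whereas for $j'$ the collar sits in the $m$-th summand. The distinguishing invariant I would use is the map $\pi_1(\partial\mathring{X})\to\pi_1(P_{n'}\setminus \mathrm{im})$ together with the peripheral map from $\pi_1(\partial P_{n'})$. In the $j$ picture the summand $X_0$, which kills $\pi_1(\partial X)$, is the one removed; in the $j'$ picture it is $X_m$ that is removed. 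Fixing the boundary $\partial P_{n'}$, I would consider which boundary component, namely the face of the $0$-th chamber in $\partial P_{n'}$, bounds a contractible piece in the complement.

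This case is the main obstacle, and I expect it to need a careful van Kampen argument on boundary connected sums. The rigidity that has to be exploited is that an ambient isotopy fixing $\partial P_{n'}$ must preserve how the loops of $\pi_1(\partial X)$ lying in the $0$-th part of $\partial P_{n'}$ die in the complement. The hypothesis $\pi_1(\partial X)\ne1$ is exactly what makes these loops nontrivial in the complement of $j(\mathring{X})$, which contains no copy of $X_0$, but trivial in the complement of $j'(\mathring{X})$, which does contain $X_0$.
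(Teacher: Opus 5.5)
Your proposal is correct and follows the paper's proof: in the first case both arguments note that $j$ and $j'$ carry $H_k(\mathring{X})$ into different summands of $H_k(X_0\natural\cdots\natural X_N)\cong\bigoplus_i H_k(X_i)$ (your collapse map $c_0$ is one way to see this, and it only needs a homotopy). In the contractible case both use an ambient isotopy fixing $\partial P_N$ to force the kernels of $\pi_1(\partial P_N)\to\pi_1(P_N\setminus\mathring{X}_0)$ and $\pi_1(\partial P_N)\to\pi_1(P_N\setminus\mathring{X}_m)$ to coincide, and the van Kampen step you anticipate is exactly the paper's observation that $P_N\setminus\mathring{X}_0\simeq\partial X_0$ (the other chambers being contractible), so that with $\pi_1(\partial P_N)\cong G_0*\cdots*G_N$, $G_i=\pi_1(\partial X_i)$, the first map is the projection onto $G_0$ while the second kills $G_0\neq 1$.
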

\begin{proof} Let $g=g_{a}$ for some $a>0$. Suppose for a contradiction that $j$ and $j'$ are smoothly isotopic. By compactness, there exists $N> a$ such that the embeddings $j: \mathring{X}\to P_{N}$ and $j'$ are isotopic through an ambient isotopy that fixes $\partial P_{N}$ pointwise. Let $P=X_{0}\natural  \cdots \natural X_N$. By Proposition \ref{prop:boundarysum}, there exists an ambient isotopy $\{\Psi_{t}:P\to P\}_{t\in [0,1]}$ on $P:=X_{0}\natural  \cdots \natural X_N$ that fixes $\partial P$ pointwise and satisfies $\Psi_{1}(\mathring{X}_0)=\mathring{X}_a$.

Suppose first that $H_{k}(X)\neq 0$ for some $k>1$. Then the map $H_{k}(\mathring{X}_0;\mathbb{Z})\to H_{k}(P;\mathbb{Z})$ and $H_{k}(\mathring{X}_a;\mathbb{Z})\to H_{k}(P;\mathbb{Z})$ have different images. So $\Psi_{t}$ cannot exist, and we arrive at a contradiction.

Finally, suppose $X$ is contractible and $\pi_1 (\partial X) \neq \{1\}$.  We consider the inclusion maps 
$\rho:\partial P\hookrightarrow P\setminus \mathring{X}_0$ and $\rho': \partial P\hookrightarrow P\setminus \mathring{X}_a$. Since $\rho'=\Psi_{1}\circ \rho$ and $\Psi_{t}|_{\partial P}=\operatorname{Id}$, the induced maps \[\rho_{*}: \pi_{1}(\partial P)\to \pi_{1}(P\setminus \mathring{X}_0) \text{ and }  \rho'_{*}: \pi_{1}(\partial P)\to \pi_{1}(P\setminus \mathring{X}_0)\] have the same kernel. Let $Y_i=\partial X_i$ and let $G_i=\pi_{1}(Y_i)$. Since $X$ is contractible,  we have  a homotopy equivalence \[P\setminus \mathring{X}_0\simeq (P\setminus \mathring{X}_0)\setminus (X_1\sharp \cdots \sharp X_N)\simeq X_0\setminus \mathring{X}_0\simeq Y_0\]
under which the inclusion map $\rho$ becomes the map 
\[
Y_0\sharp \cdots \sharp Y_{N}\to Y_0
\]
that collapses all the factors $Y_{i}$ with $i\neq 0$. Hence the map 
\[\rho_{*}:G_0\ast \cdots \ast G_N\cong \pi_{1}(\partial P) \to \pi_{1}(P\setminus \mathring{X}_0)\cong G_0\]
is the map that kills all the factors $G_{i}$ with $i\neq 0$. Similarly, the map $\rho'_{*}$  kills all the factors $G_{i}$ with $i\neq a$. So $\ker \rho_*\neq \ker \rho'_*$, contradicting again the existence of $\Psi_t$.
\end{proof}

\begin{proof}[Proof of Proposition \ref{prop1}] 
Since $D(f): D(X) \to D(X)$ is a lift of $Q(f): Q(X) \to Q(X)$, it follows that there exists a deck transformation $g \in W_0$ of $\pi : D(X) \to Q(X)$ such that $\widetilde{F}_1 = g \circ D(f)$.
So we need to show $g =1$.
Since $\widetilde F_1$ is isotopic to identity, its restriction to $\mathring{X}_0$ is isotopic to the embedding $j:\mathring{X}_0\to D(X)$. This restriction is exactly the composition 
\[
\mathring{X}\xrightarrow{f}\mathring{X}\xrightarrow{j}D(X)\xrightarrow{g}D(X).
\]
By Lemma \ref{lem: j and j' nonisotopic}, we must have $g=1$.
\end{proof}

\begin{proposition}\label{prop2}
Supose that $F_t \in \mathrm{Diff}(Q(X))$, t $\in [0,1]$, is a smooth isotopy with $F_0 = Q(\ell)$ and $F_1 = Q(f)$. Let $\widetilde{F}_t \in \mathrm{Diff}(D(X))$ be the unique smooth isotopy lifting $F_t$ with $\widetilde{F}_0 = D(\ell)$. Suppose $\widetilde{F}_1 = D(f)$. Then there exists a compactly-supported smooth isotopy $G_t \in \mathrm{Diff}_c (D(X))$ such that $G_0 = D(f)_0$ and $G_{1}|_{X_0} = \ell$.\end{proposition}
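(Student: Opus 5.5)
The idea is to turn the non-compactly supported isotopy $\widetilde{F}_t$ into a compactly supported one by composing with an isotopy that is the identity near the fundamental chamber and, far out, undoes $\widetilde{F}_t$. A cutoff does this.

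First, normalize. The isotopy $D(\ell)^{-1}\circ \widetilde{F}_t$ runs from $\mathrm{Id}_{D(X)}$ to $D(\ell)^{-1}D(f) = D(\ell^{-1}f)$. It is $W_0$-equivariant, being a lift of $Q(\ell)^{-1}\circ F_t$. Its restriction to the compact set $X_0$ is therefore an isotopy of embeddings $X_0 \to D(X)$, from the inclusion to $\ell^{-1} f$ on $X_0$. Since $[0,1]\times X_0$ is compact, the track $\bigcup_t \widetilde{F}_t(X_0)$ is contained in the interior of some $P_N$.

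Next, apply the isotopy extension theorem to the embeddings $t\mapsto \widetilde{F}_t|_{X_0}$. This yields a compactly supported ambient isotopy $H_t \in \mathrm{Diff}_c(D(X))$, supported in a slightly larger $P_{N'}$, with $H_0 = \mathrm{Id}$ and $H_t|_{X_0} = \widetilde{F}_t|_{X_0}$. Concretely, one cuts off the time-dependent vector field generating $\widetilde{F}_t$ by a bump function equal to $1$ on a neighborhood of the track of $X_0$.

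Then compose with $D(f)_0$ to fit the required endpoints. Consider $G_t := D(\ell)_0 \circ H_t^{-1}\circ \cdots$. More simply, define
\[
G_t := H_t^{-1}\circ D(\ell)_0^{-1}\circ\ \text{(adjustment)}\ \circ D(f)_0 .
\]
A cleaner choice is $G_t := K_t \circ D(f)_0$, where $K_t\in\mathrm{Diff}_c(D(X))$ is the cutoff isotopy with $K_0=\mathrm{Id}$ and $K_t|_{X_0}= \widetilde{F}_{1-t}\circ \widetilde{F}_1^{-1}$ on $f(X_0)=X_0$. Then $G_0=D(f)_0$. On $X_0$ we get $G_1 = \widetilde{F}_0\circ\widetilde{F}_1^{-1}\circ f = D(\ell)\circ D(f)^{-1}\circ f = \ell$, using the hypothesis $\widetilde{F}_1 = D(f)$. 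Finally, $G_t$ is compactly supported because $K_t$ and $D(f)_0$ are. To build $K_t$, apply isotopy extension to the embedding isotopy $t\mapsto \widetilde{F}_{1-t}\circ \widetilde{F}_1^{-1}|_{X_0}$ of the compact manifold-with-corners $X_0$, which starts at the inclusion.

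The main obstacle is the isotopy extension step for $X_0$, a codimension-zero compact submanifold-with-corners. The standard theorem applies to compact submanifolds, including codimension zero, provided one extends the time-dependent vector field $\frac{d}{dt}K_t\circ K_t^{-1}$ from the moving image to a compactly supported field. Corners cause no trouble because the vector field is defined on all of $D(X)$ and we only multiply it by a bump function. Only $G_1|_{X_0}$ is constrained, so no boundary-fixing condition is needed. The remaining check is that the resulting $G_1$ maps $X_0$ onto $X_0$, which holds since it agrees with $\ell$ there.
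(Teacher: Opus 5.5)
Your final construction is correct and is essentially the paper's argument. You take $G_t = K_t\circ D(f)_0$, with $K_t$ obtained by multiplying the generating vector field of $\widetilde F_{1-t}\circ\widetilde F_1^{-1}$ by a bump function equal to $1$ near the compact track of $X_0$; the paper instead cuts off $\widetilde F_t$ itself to an isotopy $H_t$ starting at $D(\ell)$ and equal to $D(\ell)$ off a compact set, then sets $G_t = D(\ell)\circ H_t^{-1}\circ D(f)_0$, so the only difference is your renormalization to an isotopy starting at the identity. The earlier draft paragraph asserting both $H_0=\mathrm{Id}$ and $H_t|_{X_0}=\widetilde F_t|_{X_0}$ is inconsistent (since $\widetilde F_0|_{X_0}=\ell$), but your final argument does not use it, so you should delete it.
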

\begin{proof}
Again, we consider the compact subset
\[
K := \bigcup_{t \in [0,1]}\widetilde{F}_t (X_0 ) \subset D(X)
\]
and we cut off the isotopy $\widetilde{F}_t$ outside a neighborhood $\nu (K) \subset D(X)$ of $K$, as in the proof of Proposition \ref{prop1}, leading to an isotopy $H_t \in \mathrm{Diff} (D(X))$ such that
\begin{enumerate}
\item[(i)] $H_0 = D(\ell)$ and $H_{1}|_{X_0} = f$
\item[(ii)] $H_t = D(\ell)$ outside $\nu (K)$ for all $t \in [0,1]$.
\end{enumerate}
We then take
\[
G_t  := D(\ell) \circ H_{t}^{-1} \circ D(f)_0 \in \mathrm{Diff} (D(X)).
\]
Then $G_t$ is a compactly-supported isotopy of $D(X)$ (by (ii)) with $G_0 = D(f)_0$ and $G_{1}|_{X_0} = \ell$ (by (i)), as required.
\end{proof}

\begin{proof}[Proof of Theorem \ref{theorem:main1}]

Suppose for a contradiction that there exists a smooth isotopy $F_t \in \mathrm{Diff}(Q(X))$ from $F_0 = Q(\ell)$ to $F_1 = Q(f)$. Since $H_k (X,\mathbb{Z} ) \neq 0$ in some degree $k >0$, then by Proposition \ref{prop1} the unique lift $\widetilde{F}_t \in \mathrm{Diff}(D(X))$ with $\widetilde{F}_0 = D(\ell)$ has $\widetilde{F}_1 = D(f)$. Then by Proposition \ref{prop2} there exists a compactly-supported smooth isotopy $G_t \in \mathrm{Diff}_c (D(X))$ such that $G_0 = D(f)_0$ and $G_{1}|_{X_0} = \ell$. 

Fix a word-length compatible ordering of $W$, giving an ordering $X_0 , X_1 , \ldots$ of the chambers of $D(X)$. Let $n >0$ be an integer so large that $P_n =\bigcup_{i =0}^n X_i $ contains in its interior the support of the isotopy $G_t$, $t\in [0,1]$. By Proposition \ref{prop:boundarysum} we have a diffeomorphism of manifolds-with-boundary $\phi_n : P_n \xrightarrow{\cong} X_0 \natural X_1 \natural \cdots \natural X_n$ which intertwines the canonical inclusions of $\interior{X_i}$.

Since $X$ embeds smoothly in a ball $B^d$, embedding all the chambers $X_i$ with $i \neq 0$ into a ball $B^d$ yields a smooth embedding 
\[
\psi_n  : X_0 \natural X_1 \natural \cdots \natural X_n \hookrightarrow X_0 \natural B^d \natural \cdots \natural B^d = X_0 \equiv X.
\]
Furthermore, $\psi_n$ can be chosen to intertwine the canonical inclusions of $\interior{X_0}\equiv \interior{X}$. We thus obtain an embedding $\psi_n \circ \phi_n : P_n \hookrightarrow X$ which intertwines the inclusions of $\interior{X_0}\equiv\interior{X}$. Since the interior of $P_n$ contains the support of the isotopy $G_t$, this isotopy can be extended by the identity on $X \setminus \psi_n \circ \phi_n (P_n)$ to an isotopy $f_t \in \mathrm{Diff}(X, \partial X)$ of $X$ relative to its boundary. 

Since $G_0 = D(f)_0$ then $f_0 = f$. Since $G_1 \in \mathrm{Diff}_c (D(X))$ preserves $X_0$ and $G_{1}|_{X_0} = \ell$, then $f_1$ is supported inside the union of two disjoint smoothly embedded balls:
\begin{itemize}
    \item the ball $B^d \subset \interior{X_0} \hookrightarrow X_0 \natural B^d \natural \cdots \natural B^d =X_0  \equiv X$ containing the support of $\ell$
    \item the ball $ B^d \natural \cdots \natural B^d \hookrightarrow X_0 \natural B^d \natural \cdots \natural B^d = X_0 \equiv X$ complementary to the $X_0$ summand.
\end{itemize}
 Thus $f_t \in \mathrm{Diff}(X, \partial X)$ is an isotopy relative to the boundary from $f$ to a diffeomorphism supported in the union of two balls, and hence after a further isotopy it is supported inside a single ball. This contradicts assumption (2), and hence $Q(f)$ is not smoothly isotopic to a diffeomorphism supported inside a ball in $Q(X)$.
\end{proof}

\section{Aspherical $4$-manifolds are irreducible}

Recall that a closed connected smooth $4$-manifold $X$ is called {\it irreducible}
if, for every smooth connected-sum decomposition $X\cong M\#N$ into closed connected smooth $4$-manifolds, at least one of $M$ and $N$
is a homotopy $4$-sphere.
In this section, we shall prove the following:

\begin{theorem}\label{thm:aspherical-irreducible}
Every closed connected aspherical smooth $4$-manifold is irreducible.
\end{theorem}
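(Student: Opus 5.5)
Suppose $X \cong M \# N$ with $X$ closed, connected, smooth and aspherical. We aim to show that one summand is a homotopy $4$-sphere. The strategy is to use the fundamental group together with the contractibility of the universal cover. By van Kampen (the separating sphere $S^3$ is simply connected), $\pi_1(X) \cong \pi_1(M) * \pi_1(N)$. Since $X$ is a closed aspherical manifold, $\pi := \pi_1(X)$ is torsion-free, being of finite cohomological dimension $4$.

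\textbf{Step 1: both factors nontrivial is impossible.} Suppose both $\pi_1(M)$ and $\pi_1(N)$ are nontrivial. Then $\pi$ is a nontrivial free product. Since $X$ is a closed aspherical $n$-manifold, $\pi$ is a Poincaré duality group of dimension $4$. A PD$^n$ group with $n\ge 2$ is one-ended: one has $H^1(\pi;\mathbb Z\pi)\cong H_{n-1}(\widetilde X;\mathbb Z) = 0$ for $n\ge 2$. But a nontrivial free product $A*B$ that is not $\mathbb Z/2*\mathbb Z/2$ has infinitely many ends, so $H^1(A*B;\mathbb Z\pi)\neq 0$ by Stallings/Specker. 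The group $\mathbb Z/2*\mathbb Z/2$ is excluded because it has torsion. This is a contradiction.

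\textbf{Step 2: one factor simply connected.} We may now assume $\pi_1(N)=1$, and we show $N$ is a homotopy sphere. This step is the main obstacle. Consider the universal cover $\widetilde X$. It is obtained from $\widetilde{M}$ (the universal cover of $M$) by removing one ball for each lift of the connected-sum region and gluing in a copy of the punctured $N$. Mayer--Vietoris along the disjoint union of $3$-spheres gives
\[
H_2(\widetilde X)\cong H_2(\widetilde M)\oplus\bigoplus_{\pi}H_2(N).
\]
Since $\widetilde X$ is contractible, $H_2(N)=0$. As $N$ is simply connected and closed, Poincaré duality and the Hurewicz and Whitehead theorems then make $N$ a homotopy $4$-sphere.

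The only care needed in Step 2 is to check that the Mayer--Vietoris splitting is correct, in particular that the $S^3$ terms contribute nothing in degree $2$. Alternatively, one can argue more simply that the punctured $N$ embeds in $\widetilde X$ with a collar, and that the inclusion is split injective on $H_2$ via collapsing the complement onto a point. The torsion-freeness and one-endedness facts in Step 1 are standard and will be cited.
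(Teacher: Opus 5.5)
Your proof is correct, but it rules out the case where both fundamental groups are nontrivial by a genuinely different route from the paper.

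The paper first uses collapse maps $X\to M$ and $X\to N$, which are surjective on $\pi_2$ and $\pi_3$, to get $\pi_2=\pi_3=0$ for both summands. If both $\pi_1$'s are nontrivial (hence infinite, by torsion-freeness), the universal covers of $M$ and $N$ are non-compact and simply connected with $\pi_2=\pi_3=0$. They are therefore acyclic and contractible, so $M$ and $N$ are aspherical. The wedge model $K(A*B,1)\simeq K(A,1)\vee K(B,1)$ then forces $\mathbb{Z}/2\cong H^4(X;\mathbb{Z}/2)\cong(\mathbb{Z}/2)^2$, a contradiction.

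You instead argue at the level of the group: $H^1(\pi;\mathbb{Z}\pi)\cong H^1_c(\widetilde X)\cong H_3(\widetilde X)=0$ makes $\pi$ one-ended, and no nontrivial free product is one-ended. This gives the stronger, purely group-theoretic fact that $\pi_1$ of a closed aspherical manifold is freely indecomposable, without ever analysing the summands. The cost is importing Stallings' and Specker's theory of ends, whereas the paper uses only Hurewicz, Whitehead, cellular approximation and the cohomology of wedges. Incidentally, you do not need torsion-freeness to exclude $\mathbb{Z}/2*\mathbb{Z}/2$: a two-ended group has $H^1(G;\mathbb{Z}G)\cong\mathbb{Z}\neq0$.

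Your Step 2 is essentially the paper's simply-connected case. The vanishing of $H_2$ of a lifted copy of $N^\circ$ in the contractible $\widetilde X$ replaces the paper's surjection $\pi_k(X)\to\pi_k(N)$. Your Mayer--Vietoris splitting is correct, because $H_1(S^3)=H_2(S^3)=0$ and deleting balls does not change $H_2$.
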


First, we prove a few preliminary lemmas:

\begin{lemma}
\label{lem:free-product-cohomology}
Let $G$ and $H$ be groups. Then, for every $n\geq 1$,
\[
H^n(K(G*H,1);\Z/2) \cong H^n(K(G,1);\Z/2) \oplus H^n(K(H,1);\Z/2).
\]
\end{lemma}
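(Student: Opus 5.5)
The plan is to realize $K(G*H,1)$ as a wedge of Eilenberg--MacLane spaces and then apply Mayer--Vietoris. Choose CW models $K_G = K(G,1)$ and $K_H = K(H,1)$, each with a single $0$-cell as basepoint, and form the wedge $K_G \vee K_H$. By van Kampen, $\pi_1(K_G\vee K_H)\cong G*H$. The substantive step is to show that $K_G\vee K_H$ is aspherical, so that it is a model for $K(G*H,1)$.

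To prove asphericity, I would look at the universal cover $\widetilde{K}$ of $K_G\vee K_H$ and build it as a tree of spaces. The preimage of $K_G$ is a disjoint union of copies of the contractible universal cover $\widetilde{K}_G$, one copy for each coset of $G$ in $G*H$, and the same holds for $K_H$. These copies are glued at single points according to the Bass--Serre tree of the free product. This tree is a tree precisely because every element of $G*H$ has a unique reduced word.

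Each piece is contractible and the incidence pattern is a tree, so $\widetilde{K}$ is contractible. One way to see this is to exhaust $\widetilde{K}$ by finite subtrees of pieces, each of which deformation retracts onto a point, and then apply Whitehead's theorem. Alternatively, one can show that all homotopy groups vanish, since any compact map lands in a finite subtree. I expect this tree-of-spaces argument to be the main obstacle. It can also be cited as the standard fact that a wedge of aspherical CW complexes is aspherical.

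Once $K(G*H,1)\simeq K_G\vee K_H$ is established, the cohomology computation is routine. For pointed CW complexes, the reduced Mayer--Vietoris sequence, or equivalently the wedge axiom for reduced cohomology, gives
\[
\widetilde H^n(K_G\vee K_H;\Z/2)\cong \widetilde H^n(K_G;\Z/2)\oplus \widetilde H^n(K_H;\Z/2).
\]
For $n\geq 1$, reduced and unreduced cohomology coincide. Since group cohomology does not depend on the choice of model for $K(\pi,1)$, this yields the stated isomorphism for every $n\geq 1$.
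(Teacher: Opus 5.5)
Your proposal is correct and follows essentially the same route as the paper. You model $K(G*H,1)$ by the wedge $K(G,1)\vee K(H,1)$, show its universal cover is a tree of contractible pieces glued at points (hence contractible), and conclude via the wedge decomposition of reduced cohomology, which agrees with unreduced cohomology in degrees $n\geq 1$; your extra details (the Bass--Serre tree pattern, and exhaustion by finite subtrees plus Whitehead's theorem) simply flesh out the contractibility step that the paper states in one line.
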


\begin{proof}
Choose based CW models $A=K(G,1)$ and $B=K(H,1)$, with basepoints at
$0$-cells. By the Seifert--van Kampen theorem,
$\pi_1(A\vee B)\cong G*H$.
The universal cover of $A\vee B$ is a tree of spaces whose vertex spaces
are copies of the contractible universal covers $\widetilde A$ and $\widetilde B$
and whose edge spaces are points. It is therefore contractible, so
$A\vee B$ is a model for $K(G*H,1)$.
The conclusion follows from the cohomology decomposition for finite
wedges and the agreement of reduced and unreduced cohomology in positive
degrees.
\end{proof}

\begin{lemma}
\label{lem:free-product-factors-infinite}
Let $X$ be a closed connected aspherical smooth $4$-manifold, and suppose
that $\pi_1(X) \cong A*B$ for some groups $A, B$. Then each of $A$ and $B$ is either trivial or
infinite.
\end{lemma}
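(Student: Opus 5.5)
The plan is to exploit the fact that $\pi_1(X)$ of a closed aspherical manifold is torsion-free. Then any subgroup of $\pi_1(X)$, in particular the free factors $A$ and $B$ (which embed as subgroups of $A*B$), is torsion-free. A nontrivial torsion-free group contains an element of infinite order, hence is infinite. So the whole argument reduces to proving that $G=\pi_1(X)$ has no nontrivial finite-order elements.

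For torsion-freeness, I will use the standard cohomological dimension argument. Since $X$ is aspherical, $X$ is a $K(G,1)$, and it is a closed $4$-manifold, hence a finite-dimensional CW complex. Therefore $H^n(G;M)=H^n(X;M)=0$ for all $n>4$ and all coefficient modules $M$; that is, $G$ has cohomological dimension at most $4$.

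Now suppose $g\in G$ has finite order $m>1$. Then we may pass to a subgroup $C\cong\Z/p$ for some prime $p$. The covering space $\widetilde X/C$ of $X$ is a model for $K(C,1)$, since $\widetilde X$ is contractible and $C$ acts freely. This covering is a (possibly noncompact) $4$-manifold, hence homotopy equivalent to a CW complex of dimension at most $4$, so $H^n(C;\Z/p)=0$ for $n>4$. This contradicts the well-known computation $H^n(\Z/p;\Z/p)\cong \Z/p$ for every $n\geq 0$. Hence $G$ is torsion-free.

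Combining the two parts: if $A$ is nontrivial, choose $1\neq a\in A\le A*B\cong G$. Then $a$ has infinite order, so $A$ is infinite, and the same holds for $B$. The only point requiring mild care is the claim that a noncompact $4$-manifold cover has vanishing cohomology above degree $4$. This holds because any topological (here smooth) $d$-manifold has the homotopy type of a CW complex of dimension at most $d$; in the smooth case this follows from a proper Morse function and handle decomposition. Alternatively, one can directly use Čech/sheaf cohomological dimension. I do not expect a genuine obstacle here; Lemma \ref{lem:free-product-cohomology} is not needed for this step and will presumably be used later, in proving Theorem \ref{thm:aspherical-irreducible}.
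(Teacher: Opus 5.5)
Your proposal is correct and follows the paper's approach: $\pi_1(X)$ is torsion-free because $X$ is a finite-dimensional $K(\pi_1(X),1)$, the free factors embed and inherit this, and a nontrivial torsion-free group is infinite. The paper just cites Hatcher's Proposition~2.45 for torsion-freeness, and that proposition's proof is essentially the covering-space argument with a finite-dimensional $K(\Z/p,1)$ that you spell out.
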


\begin{proof}
Since $X$ admits a finite-dimensional CW structure and is aspherical,
$\pi_1(X)$ is torsion-free (see, for example, \cite[Proposition~2.45]{Hatcher2002}).
The free factors $A$ and $B$ embed in $\pi_1(X)$ and are therefore
also torsion-free. A finite torsion-free group is trivial, so this completes the proof.
\end{proof}

\begin{lemma}\label{lem:homotopy-survives-connected-sum}
Let $X=M\#N$ be a connected sum of closed connected smooth $4$-manifolds.
There there are surjective homomorphisms 
$\pi_k(X) \to \pi_k(M)$, $\pi_k(X) \to \pi_k(N)$
for $k=2,3$.
\end{lemma}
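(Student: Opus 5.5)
The plan is to realize $\pi_k(M)$ as a retract of $\pi_k(X)$, up to passing to covers, by constructing a map $X\to M$ that has a section up to homotopy on a suitable skeleton. Write $X=M^\circ\cup_{S^3} N^\circ$, where $M^\circ=M\setminus \interior B^4$ and $N^\circ=N\setminus\interior B^4$. Collapsing $N^\circ$ to a point gives a pinch map $c_M\colon X\to X/N^\circ\cong M^\circ/S^3\cong M$. This map has degree one, so it is surjective on $\pi_1$. However, surjectivity on higher homotopy groups does not follow directly from degree one, so we will instead compare through $M^\circ$.

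First, the inclusion $M^\circ\hookrightarrow M$ is $3$-connected. Indeed, $M$ is obtained from $M^\circ$ by attaching a single $4$-cell, so cellular approximation gives that $\pi_k(M^\circ)\to\pi_k(M)$ is an isomorphism for $k\le 2$ and a surjection for $k=3$. Next, $M^\circ$ is a retract of $X$ up to this identification: the composition $M^\circ\hookrightarrow X\xrightarrow{c_M} M$ is exactly the inclusion $M^\circ\hookrightarrow M$, because $c_M$ is the identity on $M^\circ$ away from the collapsed region. Hence on $\pi_k$ we get a factorization
\[
\pi_k(M^\circ)\to\pi_k(X)\to\pi_k(M),
\]
whose composite is the surjection $\pi_k(M^\circ)\to\pi_k(M)$ for $k=2,3$. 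It follows that $(c_M)_*\colon\pi_k(X)\to\pi_k(M)$ is surjective for $k=2,3$. The same argument, with the roles of $M$ and $N$ exchanged, gives the surjection $\pi_k(X)\to\pi_k(N)$.

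The only step needing care is checking that the composite $M^\circ\to X\to M$ really is homotopic to the standard inclusion, rather than some degenerate map. To see this, fix the model in which the connected sum identifies the boundary spheres via a collar. Then $c_M$ sends $M^\circ$ identically onto its image and sends $N^\circ$ to the cone point of $M^\circ/\partial M^\circ$. Since $M^\circ/\partial M^\circ\cong M$ by a homeomorphism extending the identity on $M^\circ$ minus a collar, the composite is homotopic to the inclusion. I expect no real obstacle here; basepoints are handled by choosing the basepoint in $M^\circ$ away from the neck. Note that $\pi_1$ is not needed, and $k=2,3$ is precisely the range in which the $4$-cell attachment keeps the inclusion surjective.
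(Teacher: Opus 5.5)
Your proposal is correct and follows the paper's proof: both arguments factor the surjection $\pi_k(M^\circ)\to\pi_k(M)$ for $k=2,3$ (which comes from attaching a single $4$-cell) through $\pi_k(X)$ via a collapse map $c_M\colon X\to M$. The only difference is cosmetic: the paper builds $c_M$ by stretching a collar of $\partial N^\circ$ radially over the removed ball, so that $c_M\circ j_M=i_M$ holds exactly, whereas your pinch map gives this only up to a homotopy fixing $M^\circ$ minus a collar. You verify that homotopy correctly in your last paragraph, but your earlier claim that the composite is ``exactly'' the inclusion should be softened to match.
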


\begin{proof}
Write $X = M^\circ \cup_{S^3} N^\circ$, where $M^\circ$ and $N^\circ$
are punctured 4-manifolds obtained from $M$ and $N$, respectively.
There is a continuous map $c_M : X\to M$ whose restriction to $M^\circ$
is the identity: map a collar of $\partial N^\circ$ radially onto the
removed ball in $M$ and collapse the rest of $N^\circ$ to its center.
Let $j_M : M^\circ \hookrightarrow X$ and $i_M : M^\circ \hookrightarrow M$
denote the inclusions. Then $c_M \circ j_M = i_M$.
Since $M$ is obtained from $M^\circ$ by attaching a single $4$-cell,
it follows that $(i_M)_* : \pi_k(M^\circ) \to \pi_k(M)$ is surjective for $k=2,3$ (see, for example, \cite[Corollary~4.12]{Hatcher2002}).
The factorization $(i_M)_* = (c_M)_*\circ(j_M)_*$ therefore shows that $(c_M)_* : \pi_k(X) \to \pi_k(M)$ is surjective in these degrees.
This proves the asserted surjectivity for $M$. Interchanging $M$ and $N$ gives the
remaining statements.
\end{proof}

\begin{proof}[Proof of Theorem~\ref{thm:aspherical-irreducible}]
Let $X$ be a closed connected aspherical smooth $4$-manifold, and suppose
that $X\cong M\#N$. By Lemma~\ref{lem:homotopy-survives-connected-sum}, we have
\begin{align}
\label{eq: vanishing pi2 pi3}
\pi_2(M)=\pi_3(M)=\pi_2(N)=\pi_3(N)=0.
\end{align}
Set $A=\pi_1(M)$ and $B=\pi_1(N)$.
The Seifert--van Kampen theorem gives $\pi_1(X)\cong A*B$.
By Lemma~\ref{lem:free-product-factors-infinite}, each of $A$ and $B$
is either trivial or infinite.

If $A$ is trivial, then $M$ is $3$-connected by \eqref{eq: vanishing pi2 pi3}. Since $M$ is a closed
connected orientable $4$-manifold, the Hurewicz theorem implies that
$M$ is an integral homology $4$-sphere and that
$\pi_4(M)\to H_4(M;\mathbb Z)$ is an isomorphism.
A map $S^4\to M$ representing a generator of $H_4(M;\mathbb Z)$ is
therefore a homology equivalence between simply-connected CW complexes.
By the homological form of Whitehead's theorem (for example, \cite[Corollary~4.33]{Hatcher2002}), this map $S^4\to M$ is a homotopy
equivalence.
Thus $M$ is a homotopy $4$-sphere. The same argument applies to $N$
when $B$ is trivial.

It remains to rule out the case in which both $A$ and $B$ are infinite.
In this case, the universal cover $\widetilde M$ is a simply-connected
noncompact smooth $4$-manifold with
$\pi_2(\widetilde M)=\pi_3(\widetilde M)=0$.
The Hurewicz theorem gives
$H_i(\widetilde M;\mathbb Z)=0$ for $1\leq i\leq 3$.
But $H_i(\widetilde M;\mathbb Z)=0$ for $i\geq 4$
by the non-comactness and dimensional reason, so $\widetilde M$ is acyclic.
Again, the homological form of Whitehead's theorem implies that $\widetilde M$ is contractible.
Thus $M$ is aspherical. The same argument shows that $N$ is aspherical.
Consequently, $X$, $M$, and $N$ are models for $K(A*B,1)$, $K(A,1)$,
and $K(B,1)$, respectively.
By Lemma~\ref{lem:free-product-cohomology}, we have $H^4(X;\Z/2) \cong H^4(M;\Z/2)\oplus H^4(N;\Z/2)$, which implies $\Z/2\cong\Z/2\oplus\Z/2$, a contradiction.

Hence at least one of $M$ and $N$ is a homotopy
$4$-sphere, as required. This completes the proof.
\end{proof}

\printbibliography

@book{Hatcher2002,
  AUTHOR     = {Hatcher, Allen},
  TITLE      = {Algebraic topology},
  PUBLISHER  = {Cambridge University Press, Cambridge},
  YEAR       = {2002},
  PAGES      = {xii+544},
  ISBN       = {0-521-79160-X; 0-521-79540-0},
  MRCLASS    = {55-01 (55-00)},
  MRNUMBER   = {1867354},
  MRREVIEWER = {Donald W. Kahn},
}

@article {OP,
    AUTHOR = {Orson, Patrick and Powell, Mark},
     TITLE = {Mapping class groups of simply connected 4-manifolds with
              boundary},
   JOURNAL = {J. Differential Geom.},
  FJOURNAL = {Journal of Differential Geometry},
    VOLUME = {131},
      YEAR = {2025},
    NUMBER = {1},
     PAGES = {199--275},
      ISSN = {0022-040X,1945-743X},
   MRCLASS = {57K40 (57N37 57R52 57S05)},
  MRNUMBER = {4947553},
MRREVIEWER = {Marja\ K.\ Kankaanrinta},
       DOI = {10.4310/jdg/1755544135},
       URL = {https://doi.org/10.4310/jdg/1755544135},
}

@misc{KLMME3,
      title={Constraints on Lefschetz fibrations with four-dimensional fibers from Seiberg-Witten theory}, 
      author={Hokuto Konno and Jianfeng Lin and Anubhav Mukherjee and Juan Muñoz-Echániz},
      year={2025},
      eprint={2511.00601},
      archivePrefix={arXiv},
      primaryClass={math.GT},
      url={https://arxiv.org/abs/2511.00601}, 
}

@article {BKirreducible,
    AUTHOR = {Baraglia, David and Konno, Hokuto},
     TITLE = {Irreducible 4-manifolds can admit exotic diffeomorphisms},
   JOURNAL = {Duke Math. J.},
  FJOURNAL = {Duke Mathematical Journal},
    VOLUME = {175},
      YEAR = {2026},
    NUMBER = {4},
     PAGES = {717--733},
      ISSN = {0012-7094,1547-7398},
   MRCLASS = {57K41 (57R50 57S05)},
  MRNUMBER = {5053586},
MRREVIEWER = {Marja\ K.\ Kankaanrinta},
       DOI = {10.1215/00127094-2025-0040},
       URL = {https://doi.org/10.1215/00127094-2025-0040},
}

@article {BG,
    AUTHOR = {Budney, Ryan and Gabai, David},
     TITLE = {On the automorphism groups of hyperbolic manifolds},
   JOURNAL = {Int. Math. Res. Not. IMRN},
  FJOURNAL = {International Mathematics Research Notices. IMRN},
      YEAR = {2025},
    NUMBER = {7},
     PAGES = {Paper No. rnaf083, 28},
      ISSN = {1073-7928,1687-0247},
   MRCLASS = {57S05 (57K40)},
  MRNUMBER = {4889279},
MRREVIEWER = {Rainer\ L\"owen},
       DOI = {10.1093/imrn/rnaf083},
       URL = {https://doi.org/10.1093/imrn/rnaf083},
}

@article {BKKT,
    AUTHOR = {Bustamante, Mauricio and Krannich, Manuel and Kupers,
              Alexander and Tshishiku, Bena},
     TITLE = {Mapping class groups of exotic tori and actions by {${\rm
              SL}_d({\bf Z})$}},
   JOURNAL = {Trans. Amer. Math. Soc. Ser. B},
  FJOURNAL = {Transactions of the American Mathematical Society. Series B},
    VOLUME = {11},
      YEAR = {2024},
     PAGES = {1316--1349},
      ISSN = {2330-0000},
   MRCLASS = {57R10 (11F06 20C12 57R52)},
  MRNUMBER = {4840293},
MRREVIEWER = {Thomas\ Koberda},
       DOI = {10.1090/btran/207},
       URL = {https://doi.org/10.1090/btran/207},
}

@incollection {hatcher,
    AUTHOR = {Hatcher, A. E.},
     TITLE = {Concordance spaces, higher simple-homotopy theory, and
              applications},
 BOOKTITLE = {Algebraic and geometric topology ({P}roc. {S}ympos. {P}ure
              {M}ath., {S}tanford {U}niv., {S}tanford, {C}alif., 1976),
              {P}art 1},
    SERIES = {Proc. Sympos. Pure Math.},
    VOLUME = {XXXII},
     PAGES = {3--21},
 PUBLISHER = {Amer. Math. Soc., Providence, RI},
      YEAR = {1978},
      ISBN = {0-8218-1432-X},
   MRCLASS = {57R52},
  MRNUMBER = {520490},
MRREVIEWER = {Gerald\ A.\ Anderson},
}

@article {hsiang-sharpe,
    AUTHOR = {Hsiang, W. C. and Sharpe, R. W.},
     TITLE = {Parametrized surgery and isotopy},
   JOURNAL = {Pacific J. Math.},
  FJOURNAL = {Pacific Journal of Mathematics},
    VOLUME = {67},
      YEAR = {1976},
    NUMBER = {2},
     PAGES = {401--459},
      ISSN = {0030-8730,1945-5844},
   MRCLASS = {57E05 (57D60 58D05)},
  MRNUMBER = {494165},
MRREVIEWER = {D.\ Burghelea},
       URL = {http://projecteuclid.org/euclid.pjm/1102817501},
}

@book {AFW,
    AUTHOR = {Aschenbrenner, Matthias and Friedl, Stefan and Wilton, Henry},
     TITLE = {3-manifold groups},
    SERIES = {EMS Series of Lectures in Mathematics},
 PUBLISHER = {European Mathematical Society (EMS), Z\"urich},
      YEAR = {2015},
     PAGES = {xiv+215},
      ISBN = {978-3-03719-154-5},
   MRCLASS = {57N10 (20F65 57-02 57M27)},
  MRNUMBER = {3444187},
MRREVIEWER = {Thomas\ Koberda},
       DOI = {10.4171/154},
       URL = {https://doi.org/10.4171/154},
}

@book {farb-margalit,
    AUTHOR = {Farb, Benson and Margalit, Dan},
     TITLE = {A primer on mapping class groups},
    SERIES = {Princeton Mathematical Series},
    VOLUME = {49},
 PUBLISHER = {Princeton University Press, Princeton, NJ},
      YEAR = {2012},
     PAGES = {xiv+472},
      ISBN = {978-0-691-14794-9},
   MRCLASS = {57M50 (20F36 20F65 57M07 57N05)},
  MRNUMBER = {2850125},
MRREVIEWER = {Stephen\ P.\ Humphries},
}

@article {HLLR,
    AUTHOR = {Hebestreit, Fabian and Land, Markus and L\"uck, Wolfgang and
              Randal-Williams, Oscar},
     TITLE = {A vanishing theorem for tautological classes of aspherical
              manifolds},
   JOURNAL = {Geom. Topol.},
  FJOURNAL = {Geometry \& Topology},
    VOLUME = {25},
      YEAR = {2021},
    NUMBER = {1},
     PAGES = {47--110},
      ISSN = {1465-3060,1364-0380},
   MRCLASS = {55R20 (55R40 55R60 57P10)},
  MRNUMBER = {4226228},
MRREVIEWER = {Semra\ Pamuk},
       DOI = {10.2140/gt.2021.25.47},
       URL = {https://doi.org/10.2140/gt.2021.25.47},
}

@article {FJ89,
    AUTHOR = {Farrell, F. T. and Jones, L. E.},
     TITLE = {Negatively curved manifolds with exotic smooth structures},
   JOURNAL = {J. Amer. Math. Soc.},
  FJOURNAL = {Journal of the American Mathematical Society},
    VOLUME = {2},
      YEAR = {1989},
    NUMBER = {4},
     PAGES = {899--908},
      ISSN = {0894-0347,1088-6834},
   MRCLASS = {53C20 (57R10 57R55 57R67)},
  MRNUMBER = {1002632},
MRREVIEWER = {Ronald\ J.\ Stern},
       DOI = {10.2307/1990898},
       URL = {https://doi.org/10.2307/1990898},
}

@article {BT25,
    AUTHOR = {Bustamante, Mauricio and Tshishiku, Bena},
     TITLE = {Symmetries of exotic aspherical space forms},
   JOURNAL = {Math. Z.},
  FJOURNAL = {Mathematische Zeitschrift},
    VOLUME = {309},
      YEAR = {2025},
    NUMBER = {3},
     PAGES = {Paper No. 47, 26},
      ISSN = {0025-5874,1432-1823},
   MRCLASS = {57S17},
  MRNUMBER = {4852262},
MRREVIEWER = {Rainer\ L\"owen},
       DOI = {10.1007/s00209-024-03673-8},
       URL = {https://doi.org/10.1007/s00209-024-03673-8},
}

@book {Wall99,
    AUTHOR = {Wall, C. T. C.},
     TITLE = {Surgery on compact manifolds},
    SERIES = {Mathematical Surveys and Monographs},
    VOLUME = {69},
   EDITION = {Second},
      NOTE = {Edited and with a foreword by A. A. Ranicki},
 PUBLISHER = {American Mathematical Society, Providence, RI},
      YEAR = {1999},
     PAGES = {xvi+302},
      ISBN = {0-8218-0942-3},
   MRCLASS = {57R67 (19J25 57-02)},
  MRNUMBER = {1687388},
       DOI = {10.1090/surv/069},
       URL = {https://doi.org/10.1090/surv/069},
}

@article {smale,
    AUTHOR = {Smale, Stephen},
     TITLE = {Diffeomorphisms of the {$2$}-sphere},
   JOURNAL = {Proc. Amer. Math. Soc.},
  FJOURNAL = {Proceedings of the American Mathematical Society},
    VOLUME = {10},
      YEAR = {1959},
     PAGES = {621--626},
      ISSN = {0002-9939,1088-6826},
   MRCLASS = {57.00},
  MRNUMBER = {112149},
MRREVIEWER = {G.\ T.\ Whyburn},
       DOI = {10.2307/2033664},
       URL = {https://doi.org/10.2307/2033664},
}

@article {hatchersmale,
    AUTHOR = {Hatcher, Allen E.},
     TITLE = {A proof of the {S}male conjecture, {${\rm Diff}(S^{3})\simeq
              {\rm O}(4)$}},
   JOURNAL = {Ann. of Math. (2)},
  FJOURNAL = {Annals of Mathematics. Second Series},
    VOLUME = {117},
      YEAR = {1983},
    NUMBER = {3},
     PAGES = {553--607},
      ISSN = {0003-486X},
   MRCLASS = {57M99 (57S05)},
  MRNUMBER = {701256},
MRREVIEWER = {R. C. Kirby},
       DOI = {10.2307/2007035},
       URL = {https://doi.org/10.2307/2007035},
}

@article {igusa,
    AUTHOR = {Igusa, Kiyoshi},
     TITLE = {The stability theorem for smooth pseudoisotopies},
   JOURNAL = {$K$-Theory},
  FJOURNAL = {$K$-Theory. An Interdisciplinary Journal for the Development,
              Application, and Influence of $K$-Theory in the Mathematical
              Sciences},
    VOLUME = {2},
      YEAR = {1988},
    NUMBER = {1-2},
     PAGES = {vi+355},
      ISSN = {0920-3036},
   MRCLASS = {57R50 (19D10 55P10 57R45 58C27)},
  MRNUMBER = {972368},
MRREVIEWER = {Jean-Claude\ Sikorav},
       DOI = {10.1007/BF00533643},
       URL = {https://doi.org/10.1007/BF00533643},
}

@article {weiss-williams,
    AUTHOR = {Weiss, Michael and Williams, Bruce},
     TITLE = {Automorphisms of manifolds and algebraic {$K$}-theory. {I}},
   JOURNAL = {$K$-Theory},
  FJOURNAL = {$K$-Theory. An Interdisciplinary Journal for the Development,
              Application, and Influence of $K$-Theory in the Mathematical
              Sciences},
    VOLUME = {1},
      YEAR = {1988},
    NUMBER = {6},
     PAGES = {575--626},
      ISSN = {0920-3036},
   MRCLASS = {57N35 (57R50 57R65)},
  MRNUMBER = {953917},
MRREVIEWER = {Christopher\ W.\ Stark},
       DOI = {10.1007/BF00533787},
       URL = {https://doi.org/10.1007/BF00533787},
}

@book{Davis-coxeter,
  author    = {Davis, Michael W.},
  title     = {The Geometry and Topology of Coxeter Groups},
  series    = {London Mathematical Society Monographs Series},
  volume    = {32},
  publisher = {Princeton University Press},
  address   = {Princeton, NJ},
  year      = {2008}
}

@article{Qi,
  author  = {Qi, Dongwen},
  title   = {On irreducible, infinite, non-affine Coxeter groups},
  journal = {Fundamenta Mathematicae},
  volume  = {193},
  number  = {1},
  pages   = {79--93},
  year    = {2007}
}

@article{BartelsLuck,
  author  = {Bartels, Arthur and L{\"u}ck, Wolfgang},
  title   = {The Borel conjecture for hyperbolic and {CAT}(0)-groups},
  journal = {Annals of Mathematics},
  series  = {2},
  volume  = {175},
  number  = {2},
  pages   = {631--689},
  year    = {2012}
}

@article{Wegner,
  author  = {Wegner, Christian},
  title   = {The {$K$}-theoretic Farrell--Jones conjecture for {CAT}(0)-groups},
  journal = {Proceedings of the American Mathematical Society},
  volume  = {140},
  number  = {3},
  pages   = {779--793},
  year    = {2012}
}

@incollection {cerf-pisotopy,
    AUTHOR = {Cerf, Jean},
     TITLE = {The pseudo-isotopy theorem for simply connected differentiable
              manifolds},
 BOOKTITLE = {Manifolds--{A}msterdam 1970 ({P}roc. {N}uffic {S}ummer
              {S}chool)},
    SERIES = {Lecture Notes in Math.},
    VOLUME = {Vol. 197},
     PAGES = {76--82},
 PUBLISHER = {Springer, Berlin-New York},
      YEAR = {1971},
   MRCLASS = {57.47},
  MRNUMBER = {290404},
MRREVIEWER = {S.\ Y.\ Husseini},
}

@article {samu,
    AUTHOR = {Mu\~noz-Ech\'aniz, Samuel},
     TITLE = {Mapping class groups of {$h$}-cobordant manifolds},
   JOURNAL = {Forum Math. Sigma},
  FJOURNAL = {Forum of Mathematics. Sigma},
    VOLUME = {13},
      YEAR = {2025},
     PAGES = {Paper No. e132, 38},
      ISSN = {2050-5094},
   MRCLASS = {57R80 (55R60 57N37 57Q10 57S05)},
  MRNUMBER = {4942994},
MRREVIEWER = {Semra\ Pamuk},
       DOI = {10.1017/fms.2025.10087},
       URL = {https://doi.org/10.1017/fms.2025.10087},
}

@article{Dav83,
  author  = {Davis, Michael W.},
  title   = {Groups generated by reflections and aspherical manifolds not covered by Euclidean space},
  journal = {Annals of Mathematics. Second Series},
  volume  = {117},
  number  = {2},
  year    = {1983},
  pages   = {293--324},
  doi     = {10.2307/2007079},
}

@article {CassonHarer81,
    AUTHOR = {Casson, Andrew J. and Harer, John L.},
     TITLE = {Some homology lens spaces which bound rational homology balls},
   JOURNAL = {Pacific J. Math.},
  FJOURNAL = {Pacific Journal of Mathematics},
    VOLUME = {96},
      YEAR = {1981},
    NUMBER = {1},
     PAGES = {23--36},
      ISSN = {0030-8730,1945-5844},
   MRCLASS = {57N10 (14J17)},
  MRNUMBER = {634760},
MRREVIEWER = {W.\ D.\ Neumann},
       URL = {http://projecteuclid.org/euclid.pjm/1102734944},
}

@article{DHHRS,
  author  = {Davis, Michael and Hayden, Kyle and Huang, Jingyin and Ruberman, Daniel and Sunukjian, Nathan},
  title   = {Exotic aspherical 4-manifolds},
  journal = {Annals of Mathematics},
  note    = {To appear},
  eprint  = {2411.19400},
  archivePrefix = {arXiv},
  primaryClass  = {math.GT},
  year    = {2024},
}

@article{KLMME1,
      title={On four-dimensional Dehn twists and Milnor fibrations}, 
      author={Hokuto Konno and Jianfeng Lin and Anubhav Mukherjee and Juan Muñoz-Echániz},
      year={2024},
      journal={arXiv:2409.11961},
}

\end{document}